\newtheorem{theorem}{Theorem}[section]
\newtheorem{lemma}[theorem]{Lemma}
\newtheorem{proposition}[theorem]{Proposition}
\newtheorem{corollary}[theorem]{Corollary}
\newtheorem{claim}[theorem]{Claim}
\theoremstyle{definition}
\newtheorem{definition}[theorem]{Definition}
\newtheorem{notation}[theorem]{Notation}
\let \restr = \upharpoonright
\let \bs = \backslash
\let \into = \longrightarrow
\let \tld = \tilde
\let \sub = \subseteq
\let \elsub = \preccurlyeq
\let \av = \arrowvert
\let \ov = \overline
\let \a = \alpha
\let \b = \beta
\let \g = \gamma
\let \d = \delta
\let \e = \epsilon
\let \l = \lambda
\let \k = \kappa
\let \m = \mu
\let \n = \nu
\let \t = \theta
\let \D = \Delta
\let \s = \sigma
\let \x = \xi
\let \o = \omega
\let \P = \Pi
\let \S = \Sigma
\let \al = \aleph
\let \la = \langle
\let \ra = \rangle
\let \mtcl = \mathcal
\let \mtbb = \mathbb
\let \it = \item
\title{Forcing consequences of $\textsc{PFA}$ together with the continuum large}
\author[D. Asper\'o]{David Asper\'o}
\thanks{Mota was supported by the Austrian Science Fund FWF Project P22430. Both authors were also partially supported by Ministerio de
Educaci\'{o}n y Ciencia Project MTM2008--03389 (Spain) and by Generalitat de Catalunya Project 2009SGR--00187 (Cata\-lonia).}
\address{David Asper\'o, School of Mathematics, University of East Anglia, Norwich NR4 7TJ, UK}
\email{d.aspero@uea.ac.uk}
\author[M.A. Mota]{Miguel Angel Mota}
\address{Miguel Angel Mota,  Department of Mathematics,
University of Toronto,
Toronto, Ontario,
CANADA
M5S 2E4}
\email{motagaytan@gmail.com}
\date{}
\begin{document}

\subjclass[2000]{03E50, 03E57, 03E35, 03E05}

\maketitle
\pagestyle{myheadings}\markright{Forcing consequences of $\textsc{PFA}$ together with the continuum large}

\begin{abstract}
We develop a new method for building forcing iterations with symmetric systems of structures as side conditions. Using this me\-thod we prove that the forcing axiom for the class of all \emph{finitely proper} posets of size $\al_1$ is compatible with $2^{\aleph_{0}}> \aleph_2$. In particular, this answers a question of Moore by showing that $\mho$ does not follow from this arithmetical assumption.
\end{abstract}

\section{Introduction}

In the early days of forcing, Solovay and Tennenbaum (see \cite{Solovay-Tennenbaum}) developed the theory of c.c.c.\ iterations in order to show the consistency of Suslin's Hypothesis (i.e., the axiom saying that there are no Suslin trees). In fact, as they realised, their technique could be used to build models of $\textsc{ZFC}$ with the continuum arbitrarily large and
 satisfying a condition much stronger -- in the presence of $\lnot\textsc{CH}$ -- than Suslin's Hypothesis that came to be known as Martin's Axiom. Recall that a partial order has the countable chain condition (c.c.c.)\ if it has no uncountable antichains. Given a cardinal $\k$, Martin's Axiom for $\k$--many dense sets, $\textsc{MA}_\k$,  is the forcing axiom for the class of c.c.c.\ forcing notions and for collections of $\k$--many dense sets, i.e.\ the axiom saying that for any c.c.c.\ partial order $\mtbb P$ and any collection $\mtcl D$ of $\k$--many dense subsets of $\mtbb P$ there is a filter $G\sub\mtbb P$ having nonempty intersection with all members of $\mtcl D$, and Martin's Axiom is $\textsc{MA}_\k$ for all $\k<2^{\al_0}$. Martin's Axiom (typically in the form $\textsc{MA}_{\o_1}$) proved to be very successful in applications in infinite combinatorics, topology, algebra, and  other areas of mathematics (see \cite{Fremlin}).  The main features of c.c.c.\ forcing are that (1) c.c.c.\ forcing notions preserve all cardinals, and (2)  finite support iterations of c.c.c.\ forcing notions are themselves c.c.c. It follows from these two facts together that no forcing axiom of the form $\textsc{MA}_\k$ puts any upper bound on the size of the continuum (on the other hand, $\textsc{MA}_\k$ certainly implies $2^{\al_0}>\k$). 

About a decade later, the theory of proper forcing was developed by Shelah (\cite{SH100}, see also \cite{SHELAHPF}, \cite{SHELAHPIF}). A poset $\mtbb P$ is proper if for every regular cardinal $\l > \av TC(\mtbb P)\av$, every countable $N\elsub H(\l)$ such that $\mtbb P\in N$, and every $p\in\mtbb P\cap N$ there is a condition $q$ in $\mtbb P$ stronger than $p$ which is $(N,\,\mtbb P)$--generic, i.e., such that $q$ forces $\dot G\cap D\cap N\neq \emptyset$ for every dense subset $D$ (equivalently, maximal antichain) of $\mtbb P$ such that $D\in N$, where $\dot G$ is the canonical name for the generic filter. The class of proper forcings is much larger than the class of c.c.c.\ forcings. Nevertheless, proper forcings are well behaved in the sense that (1) they preserve $\o_1$, and that (2) every countable support iteration with proper iterands is itself proper.  Baumgartner showed the consistency of the forcing axiom for the class of proper forcings and for collections of $\al_1$--many dense sets, also known as the Proper Forcing Axiom ($\textsc{PFA}$), by building a countable support iterations of proper forcing notions of length a supercompact cardinal (see \cite{BAU}). Soon it was realised that $\textsc{PFA}$ has many consequences (see for example \cite{BAU} for a classic overview). One remarkable consequence of $\textsc{PFA}$ (by Todor\v{c}evi\'{c} and Veli\v{c}kovi\'{c}, see \cite{Bekkali} and \cite{Velickovic}) is that, unlike any forcing axiom of the form $\textsc{MA}_\k$, it does decide the value of $2^{\al_0}$; in fact, $\textsc{PFA}$ implies $2^{\al_0} = \al_2$.\footnote{The first derivation of $2^{\al_0} = \al_2$ from a natural forcing axiom was the proof, due to Foreman, Magidor and Shelah (see \cite{FMS}) that \emph{Martin's Maximum}, which is a provably maximal forcing axiom for collections of $\al_1$--many dense sets and is strictly stronger than $\textsc{PFA}$, implies $2^{\al_0} = \al_2$.}  More recently, Moore has proved (see \cite{MOORE}) that $2^{\al_0}=\al_2$ follows already from the bounded form of $\textsc{PFA}$ known as $\textsc{BPFA}$ ($\textsc{BPFA}$ can be phrased as the axiom saying that $\la H(\o_2), \in\ra$ is a $\S_1$ elementary substructure of the structure $\la H(\o_2), \in\ra$ as computed in any generic extension by a proper forcing). 

Given that strong forcing axioms typically imply $2^{\aleph_{0}}=\al_2$, a natural problem when faced with a consequence $\s$ of a forcing axiom is to find out whether $\s$ itself has any impact on the size of the continuum and which. 
The standard strategy for producing models of $\P_2$ consequences $\s$ (over the structure $\la H(\o_2), \in, \o_1 \ra$)  of forcing axioms is by means of forcing
iterations in which one keeps adding witnesses of the relevant $\S_1$ facts.\footnote{This is also the traditional way of building models of actual forcing axioms like $\textsc{MA}_\k$, $\textsc{BPFA}$, $\textsc{PFA}$, or Martin's Maximum.} If it can be shown that there is always a forcing adding these witnesses which moreover has the c.c.c., then a sufficiently long finite support iteration of (carefully chosen) instances of this forcing will produce a model of $\s$. Since finite support iterations of c.c.c.\ forcings are themselves c.c.c.\ and since c.c.c.\ forcings preserve cardinals, such a construction will give rise to models of $\s$ in which $2^{\al_0}$ can attain (almost) any arbitrarily fixed value. For example, this is the standard way of showing that Suslin's Hypothesis is consistent with $2^{\al_0}$ being any arbitrarily fixed $\al_\a$ with $cf(\al_\a)>\o_1$ (see \cite{Solovay-Tennenbaum}).\footnote{As proved by Jensen  (\cite{DJ}), Suslin's hypothesis is also consistent with $\textsc{CH}$, but the proof of this uses a countable -- rather than finite --  support iteration. Shelah's  \cite{SHELAHPIF} is a classical reference on forcing $\P_2$ statements over $\la H(\o_2),  \in, \o_1\ra$ together with $\textsc{CH}$ by means of countable support iterations. Also, Laver proved in \cite{Laver} that adding any number of random reals to any model of $\textsc{MA}_{\al_1}$ preserves Suslin's Hypothesis, and therefore Suslin's Hypothesis is consistent with $2^{\al_0}$ being singular of cofinality $\o_1$.} In fact, if $\textsc{GCH}$ holds and $\k$ is any cardinal of uncountable cofinality, then a certain finite support iteration of length $\k$ will produce a model of $2^{\al_0}=\k$ together with $\textsc{MA}_\l$ for all $\l<cf(\k)$, and  already $\textsc{MA}_{\o_1}$ implies Suslin's Hypothesis (\cite{Solovay-Tennenbaum}).  

However, one often deals with statements $\s$ that cannot be changed by c.c.c.\ forcing. Consider for example \emph{Club Guessing} ($\textsc{CG}$), which says that there is a ladder system on $\o_1$ (a \emph{ladder system} is a sequence $\langle A_{\delta} \,:\, \delta \in Lim(\o_1)\rangle$ such that each $A_\delta$ is a cofinal subset of $\delta$ of order type $\o$) that `guesses' clubs, in the sense that if $C\sub\o_1$ is a club, then there is some limit ordinal $\d<\o_1$ such that a final segment of $A_\d$ is contained in $C$. Club Guessing is clearly a consequence of Jensen's $\diamondsuit$. However, unlike $\diamondsuit$, Club Guessing is immune to c.c.c.\ forcing. In fact, every club of $\o_1$ in any generic extension by a c.c.c.\ forcing contains a club from the ground model, and therefore Club Guessing holds in any extension by any c.c.c.\ forcing if it happens to hold in the ground model. Now take our statement $\s$ to be the negation of Club Guessing. It follows from the above observation that the strategy of producing a model of $\s$ by building a finite support iteration in which the iterands  have the c.c.c.\ cannot work.  However, $\lnot\textsc{CG}$ is consistent. It follows from $\textsc{BPFA}$, and it can be forced over any model of $\textsc{GCH}$ by  a countable support iteration of proper forcings of length $\o_2$. In fact, given a ladder system $\mtcl A = \langle A_{\delta} \,:\, \delta \in Lim(\o_1)\rangle$, the natural forcing for adding, by initial segments, a club $C\sub\o_1$ `avoiding' $\mtcl A$ -- in the sense that $C\cap A_\d$ is finite for all $\d\in Lim(\o_1)$ -- is proper.\footnote{It follows from this that in fact the negation of the weaker principle known as Weak Club Guessing (see below) follows from $\textsc{BPFA}$ and can be forced over any $\textsc{GCH}$ model.}  On the other hand, although countable support iterations of proper forcings are always proper, finite support iterations of infinite length of forcings that are not c.c.c.\ always collapse $\o_1$. It follows that any (standard) forcing construction for producing a model of $\lnot\textsc{CG}$ by iterating instances of the proper forcing for adding clubs avoiding given ladder systems will have to be an iteration with countable supports rather than finite supports,  and therefore will never give rise to a model with $2^{\aleph_{0}}>\al_2$. 
The
reason is the well--known general fact that for any countable support iteration $\la\mtbb P_\xi\,:\,\x\leq\l\ra$ of non-trivial forcings and any ordinal $\xi$, if $\l\geq\x+\o_1$, then $\mtbb P_\l$ forces over $V$ that there is a surjection from $\o_1^V$ onto the reals of $V[\dot G_\x]$ (where of course $\dot G_\x$ denotes the canonical $\mtbb P_\x$--name for the generic filter). In particular, if $cf(\l)\geq\o_1$ and $\mtbb P_\l$ has the $\l$--c.c., then $\mtbb P_\l$ forces $2^{\al_0}\leq\al_2$. To sum up, c.c.c.\ forcing is useless when it comes to forcing the negation of club--guessing principles over models satisfying these club--guessing principles and, on the other hand, countable support iterations of proper forcing notions can easily give rise to generic extensions satisfying the negation of club--guessing principles, but $2^{\al_0}  \leq\al_2$ must necessarily hold in those extensions. 

In view of these considerations it is natural to enquire whether various failures of Club Guessing on $\o_1$ are consistent with the continuum large.
In some cases, this question can be settled by taking a model of the property in question together with the continuum small and arguing that adding many Cohen or random reals to it preserves the property.
For example, it can be proved that the very statement we have been considering above, namely $\lnot\textsc{CG}$, is indeed compatible with $2^{\al_0}>\al_2$. In fact it is not difficult to prove (and possibly folklore) that the product with finite supports of Cohen forcing always preserves $\lnot \textsc{CG}$.

There are other anti--diamond principles for which the strategy of adding many Cohen reals does not work. This is for example the case for the negation of \textit{Weak Club Guessing}. Weak Club Guessing ($\textsc{WCG}$) says that there exists a ladder system $\mtcl A=\langle A_{\delta} : \delta \in Lim(\o_1) \rangle$ with the property that for every club $C \subseteq \omega_{1}$ there is some $\delta \in C$ such that $A_{\delta} \cap C$ is infinite. 
Note that Club Guessing implies Weak Club Guessing and that, by what we have already mentioned, $\lnot \textsc{WCG}$ is a consequence of $\textsc{BPFA}$ and can be forced over any model of $\textsc{GCH}$ by a countable support iteration of proper forcings of length $\o_2$. On the other hand, Cohen forcing always adds a ladder system witnessing $\textsc{WCG}$. This was originally proved by Juhasz in \cite{JUHASZ}, where he showed that a weakening of $\clubsuit$ implying Weak Club Guessing always holds after adding  a Cohen real.  Hence, the consistency of $\lnot \textsc{WCG}$
with $2^{\al_0}>\al_2$ cannot be proved by adding Cohen reals to a model where Weak Club Guessing is
false. However, one possibility for this is to add many random reals to a model of $\lnot \textsc{WCG}$. In fact, it is not hard to see (and, again, possibly folklore)
that random forcing always preserves $\lnot \textsc{WCG}$.

There are however strengthenings of $\lnot \textsc{CG}$ for which the above methods do not work. Consider for example the conjunction of $\lnot\textsc{WCG}$ and $\textsc{MA}_{\o_1}$. This theory implies several strong forms of $\lnot\textsc{WCG}$.\footnote{One such strong form of $\lnot\textsc{WCG}$ is for instance $Code(\textrm{even--odd})$, a principle formulated by Miyamoto saying that for every ladder system $\mtcl A=\langle A_{\delta} : \delta \in Lim(\o_1)\rangle$ and every
$B\subseteq \omega_{1}$ there are two clubs $C$ and $D$ of $\omega_{1}$ such that for each $\delta \in C$, if
$\delta \in B$ (resp.\ $\delta \notin B$), then $\av A_{\delta} \cap D\av <\aleph_{0} $ is odd (resp.\ even).}
 As we said, $\lnot\textsc{WCG}$ is preserved after adding random reals, but $\textsc{MA}_{\o_1}$ will fail in the resulting model.\footnote{One reason why $\textsc{MA}_{\o_1}$ fails after adding random reals is that in the extension there is a c.c.c.\ partial order whose product with itself is not c.c.c. This result is due to Kunen and a proof can be found in \cite{Roitman}.}

Another example of a strengthening of $\lnot \textsc{CG}$ that cannot be forced easily with a large continuum is the negation of $\mho$ (mho). The principle $\mho$, formulated by Moore, says that there is a sequence $\langle f_{\alpha} : \alpha \in \omega_{1}\rangle$ such that $f_{\alpha}$ is a continuous map, with respect to the order topology, from $\alpha$ into $\omega$ for all $\alpha \in \omega_{1}$, and with the property that for every club $E\subseteq \omega_{1}$ there is a $\delta$ in $E$ such that $f_{\delta}$ takes all values in $\omega$ on $E \cap \delta$. The following is an observations of Moore
concerning this statement: Notice that if $\alpha < \omega_{1}$ and $f:\alpha \rightarrow \omega$ is continuous, then $\alpha$ can be partioned into clopen intervals on which $f$ is constant. In such a situation there is a cofinal $C \subseteq \alpha$ of order-type at most $\omega$ such that $f(\varepsilon)$ depends only on the size of $\varepsilon \cap C$.
From this it is clear that $\mho$ follows from $\textsc{CG}$. In \cite{MOORE2} Moore shows that $\mho$ implies the existence of an Aronszajn line containing no Countryman type, and asks whether $\mho$ follows from $2^{\aleph_{0}}> \aleph_{2}$. One motivation for this question is that, by the above implication, $\mho$ entails that there is no basis for the uncountable linear orders containing exactly $5$ uncountable members.\footnote{If there is such a basis for the uncountable linear orders, then there must be a Countryman type $C$ such that every Aronszajn line contains a copy of $C$ or of the reverse of $C$ (or of both).} On the other hand, Moore proved in \cite{MOORE5} that the existence of such a basis is consistent with $\textsc{ZFC}$ and that in fact it follows from $\textsc{PFA}$ that there is such a basis. Hence, if $\mho$ could be derived from $2^{\al_0}>\al_2$, then the existence of a $5$ element basis for the uncountable linear orders would imply $2^{\al_0}=\al_2$ (it is easy to see that it implies $2^{\al_0}>\al_1$). 

It should be noted that $\textsc{MA}_{\o_1}$, $\lnot \textsc{WCG}$ and $\lnot\mho$ follow from the forcing axiom for the class of all proper posets of size $\al_1$ (which we will call $\textsc{PFA}(\o_1)$).
In this paper we introduce an alternative method to standard countable support iterations for producing models of certain $\P_2$ statements. Using this method we prove that a certain forcing axiom which is a natural fragment of $\textsc{PFA}(\o_1)$ and which implies the three statements above -- $\textsc{MA}_{\o_1}$, $\lnot \textsc{WCG}$, and $\lnot\mho$ -- is consistent together with  $2^{\al_0}>\al_{2}$.\footnote{Concerning the connection mentioned before between $\mho$ and the (non)existence of a $5$ element basis for the uncountable linear orders, we should point out that it is still open whether the existence of such a basis is compatible with $2^{\al_0}>\al_2$.} In fact, we build a cardinal--preserving generic extension where this fragment of $\textsc{PFA}(\o_1)$ holds and $2^{\aleph_{0}}$ is equal to $\kappa$, where $\kappa$ is an arbitrarily fixed cardinal satisfying certain $\textsc{GCH}$ like assumptions in the ground model.

\begin{definition}
Given a poset $\mtbb P$, we will say that $\mtbb P$ \emph{is finitely proper} if and only if for every regular cardinal $\lambda>\av TC(\mtbb P)\av$, every finite set $\{N_i\,:\, i \in m\}$ of countable elementary substructures of $H(\lambda)$ containing $\mtbb P$ and every condition $p\in \bigcap\{N_i\, :\, i < m\} \cap\mtbb P$ there is a $\mtbb P$--condition extending $p$ and $(N_i,\, \mtbb P)$--generic for all $i$.
\end{definition}

\begin{definition}
Let $\textsc{PFA}^{\mbox{fin}}(\o_1)$ denote the forcing axiom for the class $\Gamma$ of all finitely proper posets of size $\al_1$ and for families of $\al_1$--many dense sets. More precisely, $\textsc{PFA}^{\mbox{fin}}(\o_1)$ says that whenever $\mtbb P \in \Gamma$ and $\{D_{\alpha} \,:\, \alpha \in \aleph_{1}\}$ is a set of dense subsets of
$\mtbb P$, there is a filter $G$ on $\mtbb P$ meeting every $D_{\alpha}$.
\end{definition}

Note that c.c.c.\ partial orders are finitely proper. Indeed, if $\mtbb P$ is c.c.c.\ and $N$ is any countable elementary substructure of $H(\l)$, for any cardinal $\l>\av TC(\mtbb P)\av$,  such that $\mtbb P\in N$, then any condition in $\mtbb P$ is $(N,\,\mtbb P)$--generic, simply because $A\sub N$ for any maximal antichain of $\mtbb P$ in $N$. Therefore the forcing axiom $\textsc{PFA}^{\mbox{fin}}(\o_1)$ is a generalization of $\textsc{MA}_{\o_1}$. Also, unlike any form of Martin's Axiom, $\textsc{PFA}^{\mbox{fin}}(\o_1)$ does have a strong impact on the club filter on $\o_1$. Specifically, this forcing axiom implies the failure of both $\textsc{WCG}$ and $\mho$. The proofs of these implications can be found in Section \ref{applications}.

Our main theorem is the following.

\begin{theorem}\label{mainthm} ($\textsc{CH}$) If $\k$ is a cardinal such that $\k^{\al_1}= \k$ and $2^{<\k}=\k$, then there exists a proper forcing notion $\mtcl P$ with the $\al_2$--chain condition such that both $\textsc{PFA}^{\mbox{fin}}(\o_1)$ and $2^{\al_0}=\k$ hold in the generic extension by $\mtcl P$.
\end{theorem}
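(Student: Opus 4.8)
The plan is to realize $\mathcal P=\mathcal P_\kappa$ as the direct limit of a forcing construction of length $\kappa$ whose conditions are \emph{finite}: a condition carries a finite ``working part'' together with a finite ``side condition'' which is a \emph{symmetric system} of countable elementary submodels of $H(\kappa)$ (or of a slightly larger $H(\lambda)$). The restriction to finite conditions is forced on us: a countable support iteration, $\kappa$ having cofinality greater than $\omega_1$ (which $\kappa^{\aleph_1}=\kappa$ guarantees), would add a $Coll(\omega_1,\kappa)$-generic and collapse the continuum to $\aleph_2$, while a plain finite support iteration of the (non-ccc) finitely proper iterands need not be proper. The symmetric systems are there to restore properness while keeping the conditions finite. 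Fix a bookkeeping function $\Phi\colon\kappa\to H(\kappa)$ — legitimate since $|H(\kappa)|=\kappa$ by $2^{<\kappa}=\kappa$ — chosen so that every pair consisting of a name for a finitely proper poset with underlying set $\omega_1$ and a name for an $\omega_1$-sequence of dense subsets of it is guessed cofinally often; at stage $\alpha$, $\Phi$ selects a $\mathcal P_\alpha$-name $\dot Q_\alpha$ forced to be finitely proper with underlying set $\omega_1$. A condition in $\mathcal P_\kappa$ is then a pair $p=(f_p,\mathcal N_p)$, where $f_p$ is a finite partial function on $\kappa$ with $f_p(\alpha)$ a $\mathcal P_\alpha$-name for a condition of $\dot Q_\alpha$ and $\mathcal N_p$ a finite symmetric system, subject to a coherence clause linking the two (for $N\in\mathcal N_p$: $f_p\restriction N\in N$, and $f_p(\alpha)$ is decided inside $N$ whenever $\alpha\in N$); extension means coordinatewise extension of working parts together with $\mathcal N_q\subseteq\mathcal N_p$. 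Everything hinges on defining ``symmetric system'' — closure of the set of models under the isomorphisms identifying models of equal $\omega_1$-height, plus a membership structure between models of different heights — so that the class of symmetric systems is closed under the two operations the proof needs: \emph{adjoining a new model} and \emph{amalgamating two isomorphic systems over a common root}.

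Next I would prove properness in the strong form: for all large regular $\theta$, every countable $M\prec H(\theta)$ with $\mathcal P_\kappa,\Phi\in M$, and every $p\in\mathcal P_\kappa\cap M$, closing $\mathcal N_p\cup\{M\cap H(\kappa)\}$ off to a symmetric system $\mathcal N$ and setting $p^\ast=(f_p,\mathcal N)$ produces a condition extending $p$ that is $(M,\mathcal P_\kappa)$-generic. This goes by induction along the iteration; in the step from $\mathcal P_\alpha$ to $\mathcal P_{\alpha+1}$ one invokes that $\dot Q_\alpha$ is forced to be finitely proper, and the reason finite properness is exactly the right hypothesis is that $\mathcal N$ is \emph{finite}: below a given extension one must simultaneously satisfy the genericity demands of the finitely many models of $\mathcal N$ lying past stage $\alpha$, which is precisely what a finitely proper iterand allows, whereas a merely proper iterand handles only one model at a time. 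In particular $\mathcal P_\kappa$ preserves $\omega_1$.

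The $\aleph_2$-chain condition is the technical core. Given $\aleph_2$ conditions, a $\Delta$-system argument — this is where $\mathrm{CH}$ enters, ensuring that only $\aleph_1$ possibilities arise for the relevant countable parts of the models and hence that the side conditions can be ``aligned'' — thins them to an $\aleph_2$-sized family any two of whose side conditions are isomorphic over a common root on which their working parts agree, with disjoint remaining supports. One then shows that any two such conditions $p,q$ are compatible, taking as side condition of a common extension the closure of $\mathcal N_p\cup\mathcal N_q$ under the symmetric-system operations: the isomorphism between $\mathcal N_p$ and $\mathcal N_q$ is used to check that this closure stays finite and symmetric, while the working parts fit because they agree on the root and have disjoint domains off it. This amalgamation lemma for symmetric systems is the genuinely new ingredient, and proving that the closure remains finite and legitimate is the step I expect to be the main obstacle — it is what replaces the countable-support machinery that is unavailable once the continuum is large.

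Finally, $|\mathcal P_\kappa|\leq\kappa$ follows from $2^{<\kappa}=\kappa$ and $\kappa^{\aleph_1}=\kappa$ (there are $\kappa$ countable submodels of $H(\kappa)$ and $\kappa$ names for the iterands), so by the $\aleph_2$-cc there are only $\kappa^{\aleph_0}=\kappa$ nice names for reals and $2^{\aleph_0}\leq\kappa$ in the extension; since the construction is in effect a finite support iteration with cofinally many nontrivial steps (one may interleave Cohen forcing, which is finitely proper), it adds $\kappa$ reals, so $2^{\aleph_0}=\kappa$. For $\textsc{PFA}^\star(\omega_1)$: work in $V[G]$, let $Q$ be finitely proper of size $\aleph_1$ with underlying set $\omega_1$, and let $\langle D_i:i<\omega_1\rangle$ enumerate $\aleph_1$ dense subsets; as $Q$ and this sequence are coded by subsets of $\omega_1$, the $\aleph_2$-cc together with $\mathrm{cf}(\kappa)>\omega_1$ puts names for them in $V^{\mathcal P_\alpha}$ for some $\alpha<\kappa$, and a reflection argument using properness of the tail $\mathcal P_\kappa/\mathcal P_\alpha$ shows that, below a suitable condition, $\mathcal P_\alpha$ forces these names to be a finitely proper poset of size $\aleph_1$ together with a sequence of dense sets; by the bookkeeping this data is realized as $\dot Q_\beta$ with its dense sets at some stage $\beta\geq\alpha$, and forcing with $\dot Q_\beta$ there adds a filter meeting all the $D_i$. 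Verifying this last reflection step, alongside the amalgamation lemma, are the two places that demand real care.
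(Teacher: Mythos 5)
Your high-level picture — finite working parts, finite symmetric systems of countable elementary submodels as side conditions, a bookkeeping surjection $\Phi$ onto $H(\kappa)$, a $\Delta$-system argument under $\textsc{CH}$ for the $\aleph_2$-chain condition, and the observation that \emph{finite} properness is precisely what lets a single extension serve the finitely many models present in a side condition at once — is the paper's architecture, and your last paragraph is essentially the paper's Claim (use the $\aleph_2$-cc and $\mathrm{cf}(\kappa)>\omega_1$ to reflect $\dot{\mathcal R}$ and its $\omega_1$-many dense sets into some $V^{\mathcal P_\alpha}$, then argue it is $V$-finitely proper with respect to $V^{\mathcal P_\alpha}$ and appeal to bookkeeping). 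But the implementation you sketch diverges from the paper's in two linked places, and the second is a genuine gap.

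First, the paper does not iterate names. It restricts the iterands to (names for) finitely proper posets with underlying set $\omega_1$ and weakest condition $0$, so that the working part $p(\alpha)$ is literally a countable ordinal and ``support'' means the finite set of coordinates where $p(\alpha)\neq 0$. Because $p(\alpha)\in\omega_1$, it is automatically an element of every model $N$ with $\delta_N$ large enough, so no clause of the form ``$f_p\restriction N\in N$'' or ``$f_p(\alpha)$ decided inside $N$'' is imposed; the only constraint linking side conditions to working parts is a \emph{genericity promise} (clause $(b\,4)$): for the relevant models $N$, $q\restr\alpha$ forces $p(\alpha)$ to be $(N[\dot G_\alpha],\dot{\mathcal R})$-generic. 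Your decision clause is a stronger and different demand, and it pulls against the amalgamation you need for the $\aleph_2$-cc: when gluing $p$ and $q$ over a common root, the combined working part off the root will typically not lie in the models on either side.

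Second — and this is the gap — you have nothing playing the role of the paper's \emph{markers}. A condition in $\mathcal P_\alpha$ is $(p,\{(N_i,\beta_i):i<m\})$ where the ordinal $\beta_i\leq\alpha$ records up to which stage $N_i$ imposes its genericity requirements, and the extension relation allows markers to increase. The genericity promise at coordinate $\alpha$ is only active for models $N$ whose marker is $\alpha+1$. This is exactly what makes Lemma~\ref{compll} — the completeness of $\mathcal P_\alpha$ as a suborder of $\mathcal P_\beta$ — go through: a model $N$ adjoined while strengthening a condition inside $\mathcal P_\alpha$ enters with marker $\leq\alpha$, so even if $N\cap[\alpha,\beta)\neq\emptyset$ it imposes no promises on the tail $r\restriction[\alpha,\beta)$ of a $\mathcal P_\beta$-condition one wishes to reattach. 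The authors flag this explicitly (``One crucial point is the use of the markers $\beta_i$''). With your extension relation, $\mathcal N_q\subseteq\mathcal N_p$ and nothing more, adding $M\cap H(\kappa)$ to a condition at stage $\alpha$ silently demands that all later coordinates lying in $M$ be generic over $M$, a demand the $\mathcal P_\beta$-conditions you need to amalgamate with cannot be expected to meet. Without a substitute for the marker device, the complete embeddings $\mathcal P_\alpha\hookrightarrow\mathcal P_\beta$ fail, and the inductive scaffolding behind both the properness proof (Lemma~\ref{horribilis}) and the chain-condition proof (Lemma~\ref{cc}) collapses. Your other steps (Cohen forcing at cofinally many stages for $2^{\aleph_0}\geq\kappa$; nice-name counting for $\leq\kappa$) are fine and match the paper's Lemma~\ref{kappa}.
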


Our method produces a proper forcing notion with the $\al_2$--chain condition. This forcing notion $\mtcl P$ is the direct limit $\mtcl
P_{\k}$ of a sequence $\langle \mtcl P_{\alpha}: \alpha < \k \rangle$ of partial orders, where $\mtcl P_{\alpha}$ is a complete
suborder of $\mtcl P_{\beta}$ whenever $\alpha$ is less than $\beta$. Our construction can thus be seen as a forcing iteration in a broad sense.

One crucial feature in the proof of properness is the use of certain finite ``symmetric systems'' of countable structures as side conditions. These structures will be elementary substructures of $H(\kappa)$ and will be added by $\mtcl P_{0}$. If $N$ is one of them, $q=(F, \D) \in \mtcl P_\a$,  $(N, \alpha) \in \D$, and $N$ belongs to a club of ``sufficiently correct'' structures, then all relevant pieces of information coming from any $\mtcl P_\a$--extension of $q$ can be relativized to $N$. This is the case essentially because, under the above assumptions, if $\x\in dom(F)$ -- i.e., $F(\x)$ carries nontrivial information on the (finitely proper) poset $\Phi(\x)$ with domain included in $\o_1$ picked by our bookkeeping --  and $\x\in \a \cap N$, then $F(\x)$ is asked to be generic over $N[\dot G_\x]$ with respect to  $\Phi(\x)$. The domain of $F$ will be finite, and for every $\xi\in dom(F)$,  $F(\x)$ will be a $\mtcl P_\x$ --name for a countable ordinal. The general technique of ensuring properness of a given forcing notion by explicitly incorporating elementary substructures of some large enough model as side conditions may be traced back to Todor\v{c}evi\'{c}'s
\cite{TO}. The more specific approach of considering symmetric systems of countable structures as side conditions in contexts  in which one starts with a model of $\textsc{CH}$ and wants to obtain a forcing notion which is proper and has the $\al_2$--chain condition is quite natural. In fact, this approach has already shown up in several places in the literature prior to our work (see for example \cite{Abraham-Cummings}, \cite{KOSZMIDER} and \cite{TO1}). The main novelty of our present work is that it incorporates the use of symmetric systems of structures as side conditions affecting all iterands of a given forcing iteration (or of an initial segments thereof) rather than a single forcing as in the above references. It is worth pointing out that Neeman (\cite{Neeman}) has developed a different method for building proper forcing notions by means of finite support iterations with side conditions. His side conditions are $\in$--chains of certain types of objects rather than symmetric systems of countable structures. The members of Neeman's side conditions may be countable elementary substructures but may be also of a different nature. One important difference between his work and ours is that in ours we strive to obtain a forcing notion with the $\al_2$--c.c., which is the reason why we cannot do with $\in$--chains of structures and need symmetric systems instead, whereas the $\al_2$--c.c.\ typically does not hold in Neeman's constructions.\footnote{He typically does need some $\k$--c.c., for larger $\k$, which he tends to achieve thanks to the use of structures of the form $H(\a)$ in his side conditions.}

We feel that the main contribution of the present paper is not so much a particular consistency result as the introduction of a fairly general method for building interesting forcing constructions. In fact, we have found further applications of (variations of) our method since this paper was first written and circulated in 2010. 
For instance, in \cite{genMA} we build a model of a generalisation of Martin's Axiom to a certain natural class of forcing notions with the $\al_2$--chain condition, with no restriction on their size. This generalisation of Martin's Axiom implies certain interesting `uniform' failures of Club Guessing whose consistency we don't know how to prove by methods other than ours.

There are natural weakenings of $\mho$ whose negation does not seem to hold in the models built by the methods in the present paper. Specifically, given $n<\o$ let $\mho_n$ be the principle saying that there is a sequence $(f_\d)_{\d\in\o_1}$ with $f_\d:\d\into n$ continuous function for each $\d$ such that for every club $C\sub\o_1$ there is some $\d\in C$ such that $f_\d^{-1}(j)\cap C$ is unbounded in $\d$ for all $j<n$. Clearly, for all $2\leq n<m<\o$, $\mho$ implies $\mho_m$, and  $\mho_m$ implies $\mho_n$ (these weakenings of $\mho$ have been defined also by Moore). Our present methods do not seem to produce models of $\lnot\mho_n$ for any $n$ (see Section \ref{applications} for a brief discussion of this).  
We should point out that, even if none of the principles $\lnot\mho_n$ is known to follow from $\textsc{PFA}^{\mbox{fin}}(\o_1)$, already $\lnot\mho_2$ certainly follows from $\textsc{PFA}(\o_1)$ (see the remark in Section \ref{applications}). 

The rest of the paper is organized as follows: In Section \ref{symmetric_systems} we introduce the notion of symmetric system of structures and prove basic properties of this notion that we will use throughout the paper. In Section \ref{a_general_construction} we present a rather general construction of a finite support forcing iteration using symmetric systems of structures as side conditions and prove several facts applying to this general context. Section \ref{the_forcing_construction} starts with the definition of a partial order $\mtcl P$ that will be shown to witness the conclusion of Theorem \ref{mainthm}. This partial order is a special case of the construction in Section \ref{a_general_construction}. We then prove the relevant facts of $\mtcl P$ not covered by the general theory in Section \ref{a_general_construction}. It follows from these facts that $\mtcl P$ indeed witnesses the conclusion of Theorem \ref{mainthm}. Finally, in Section \ref{applications} we show that $\textsc{PFA}^{\mbox{fin}}(\o_1)$ implies the failure of Weak Club Guessing and of $\mho$.

Even if this work tries to be reasonably self--contained, we will assume that the reader has a good knowledge of forcing, and in particular some familiarity with proper forcing. Two good references are Kunen (\cite{KUNEN}) and Jech (\cite{JECH}). Most of our notation is standard, and we have tried to give complete explanations of the relevant symbols and notions whenever we deviate from the standard use.

\textbf{Acknowledgments}:  We wish to thank Hiroshi Sakai for showing us a proof that the negation of Club Guessing is preserved by any product with finite supports of Cohen forcing and a proof that Cohen forcing adds a Weak Club Guessing sequence, and Michael Hrusak for showing us a proof that adding random reals preserves $\lnot\textsc{WCG}$.
 We thank two anonymous referees for urging us to isolate a toolbox of basic lemmas and write their proofs with full resolution. This has indeed made the paper more easily readable. Finally, we thank the referees for observing that $Code(\textrm{even--odd})$ follows from $\textsc{MA}_{\o_1}$ together with $\lnot\textsc{WCG}$.\footnote{In an earlier version of the paper we were focusing on $Code(\textrm{even--odd})$ rather than the stronger (and more natural) $\textsc{MA}_{\o_1} + \lnot\textsc{WCG}$.} 

\section{Symmetric systems}\label{symmetric_systems}

Our forcing $\mtcl P$ for proving Theorem \ref{mainthm}, to be defined in Section \ref{the_forcing_construction}, will be the direct limit $\mtcl P_{\k}$ of a certain sequence $\la\mtcl P_\a\,:\, \a < \k\ra$ of forcings. The properness of each $\mtcl P_\a$ will be witnessed by a certain club $\mtcl M^\ast_\a$ of $[H(\t_\a)]^{\al_0}$ for some high enough cardinal $\t_\a$ (see Section \ref{the_forcing_construction}). The main idea here is to use the elements of $\mtcl M_\a$ -- where $\mtcl M_\a$ is the club of restrictions to $H(\k)$ of members of $\mtcl M^\ast_\a$ -- as side conditions to ensure properness, but without losing the $\al_2$--chain condition. This brings us to the notion of \emph{symmetric system} of structures. As we mentioned in the introduction, the notion of symmetric system of structures is a natural one in the context of building forcing notions, over models of $\textsc{CH}$, which are intended to be both proper and with the $\al_2$--chain condition. In this section we define this notion and analyse its basic properties, which we will repeatedly use throughout the rest of the paper.
This type of analysis can be found also for example in \cite{Abraham-Cummings}, \cite{KOSZMIDER} and \cite{TO1}, where the notion of symmetric system shows up too (with different names).

Here, and in the remainder of the paper, we adopt the convention of denoting by $\d_N$ the ordinal $N\cap\o_1$ if $N$ is a set such that $N\cap\o_1$ is an ordinal. Also, in this section $\k$ can be taken to be the same $\k$ that has been fixed in the statement of Theorem \ref{mainthm}, but everything here works the same with any other choice of $\k$ (as long as $\k\geq\o_2$ and $\k$ is a cardinal).\footnote{The theory works also for the case $\k=\o_1$ but this is a degenerate case in which symmetric systems are simply finite $\in$--chains of countable transitive models.} 

\begin{definition}\label{symm}
Let $P\sub H(\k)$, and let $\{N_i\,:\,i<m\}$ be a finite set of countable subsets of $H(\k)$. We will say that \emph{$\{N_i\,:\,i<m\}$ is a $P$--symmetric system} if

\begin{itemize}

\it[$(A)$] For every $i<m$, $(N_i, \in, P)$  is an elementary substructure of $(H(\k), \in, P)$.

\it[$(B)$] Given distinct $i$, $i'$ in $m$, if $\d_{N_i}=\d_{N_{i'}}$, then there is a (unique) isomorphism $$\Psi_{N_i, N_{i'}}:(N_i, \in, P)\into (N_{i'}, \in, P)$$ 

\noindent Furthermore, we ask that $\Psi_{N_i, N_{i'}}$ be the identity on $N_i\cap N_{i'}$.

\it[$(C)$] For all $i$, $j$ in $m$, if $\d_{N_j}<\d_{N_i}$, then there is some $i'<m$ such that $\d_{N_{i'}}=\d_{N_i}$ and $N_j\in N_{i'}$.

\it[$(D)$] For all $i$, $i'$, $j$ in $m$, if $N_j\in N_i$ and $\d_{N_i}=\d_{N_{i'}}$, then there is some $j'<m$ such that $\Psi_{N_i, N_{i'}}(N_j)=N_{j'}$.

\end{itemize}

\end{definition}

In (A) in the above definition, and elsewhere, we will tend to refer to structures  $(N, \in, P\cap N)$ by the simpler expression $(N, \in, P)$.

\begin{lemma}\label{iso1}
Let $P\sub H(\kappa)$ and let $N$, $N'$ and $M$ be countable elementary substructures of $(H(\kappa), \in, P)$. Suppose $M \in N$ and $\Psi_{N, N'}: (N, \in, P)  \into (N', \in, P)$ is an isomorphism. Then $\Psi_{N, N'}(M)$  is also a countable elementary substructure of $(H(\kappa), \in, P)$.
\end{lemma}

\begin{proof}

First note that $\Psi_{N, N'} \restr M$ is an isomorphism between $(M, \in, P)$ and $(\Psi_{N, N'}(M), \in, P)$. Assume now that $\overrightarrow{y}$ is a finite vector of elements of $\Psi_{N, N'}(M)$ and that the formula $\exists x \varphi(x, \overrightarrow{y})$ is true in $(H(\kappa), \in, P)$. We show that there is some $z\in\Psi_{N, N'}(M)$ such that $(H(\kappa), \in, P)\models \varphi(z, \overrightarrow{y})$. But $\overrightarrow{y}$ is also a finite vector of elements of $N'$ and by correctness of $(N', \in, P)$, the formula $\exists x \varphi(x, \overrightarrow{y})$ is true in $(N', \in, P)$. Let $\overleftarrow{y}$ be the vector of elements of $N$  such that $\Psi_{N, N'}(\overleftarrow{y})=\overrightarrow{y}$, and note that the formula $\exists x \varphi(x, \overleftarrow{y})$ is true in $(N, \in, P)$ since the map $\Psi^{-1}_{N, N'}:(N', \in, P)\into (N, \in, P)$ is an isomorphism. Furthermore, by correctness of $(N, \in, P)$, the formula $\exists x \varphi(x, \overleftarrow{y})$ is also true in $(H(\kappa), \in, P)$. From this, and using the fact that $\overleftarrow{y}$ is also a vector of elements of $M$, we conclude that there exists an element $z'$ in $M$ such that the formula $\varphi(z', \overleftarrow{y})$ is true in $(M, \in, P)$. It suffices to let $z =\Psi_{N, N'}(z')$.
\end{proof}

\begin{lemma}\label{iso2}
Let $P\sub H(\k)$, let $\mtcl N$ be a $P$--symmetric system, and let $N\in\mtcl N$.

\begin{itemize}
\it[(i)] If $M_1 \in \mtcl N \cap N$ and there is some $M_2 \in \mtcl N$ (not necessarily in $\mtcl N \cap N$) such that $\delta_{M_1} < \delta_{M_2} < \d_N$, then there is some $M_3$ in $\mtcl N \cap N$ such that $M_1 \in M_3$ and $\d_{M_3}=\d_{M_2}$.

\it[(ii)] In particular, $\mtcl N \cap N$ is also a  $P$--symmetric system.

\it[(iii)] If $\mtcl W \subseteq  N$ is a $P$--symmetric system and $\mtcl N \cap N  \subseteq \mtcl W$, then $$\mtcl V:= \mtcl N \cup\{\Psi_{N, N'}(W) \,:\, W \in \mtcl W,\, N' \in \mtcl N,\,\d_{N'}=\d_N\}$$ is a $P$--symmetric system.
\end{itemize}
\end{lemma}

\begin{proof}
We start with (i). Since $\mtcl N$ satisfies clause $(C)$ in the definition of symmetric system, there exist some $M_4$ and $N'$ in $\mtcl N$ such that $M_1 \in M_4 \in N'$, $\delta_{M_4}= \delta_{M_2}$, and $\delta_{N'}= \delta_{N}$.  Since $(N', \in, P)$ and $(N, \in, P)$ are isomorphic and the corresponding isomorphism $\Psi_{N', N}$ fixes $N \cap N'$ (in particular, it fixes $M_1$), it follows that $M_1 \in \Psi_{N', N}(M_4)$. Finally, note that clause $(D)$ implies that $M_3:=\Psi_{N', N}(M_4)$ is in $\mtcl N \cap N$, and that $\d_{M_3}=\d_{M_4}=\d_{M_2}$ again as $\Psi_{N', N}$ fixes $N\cap N'$.

Let us move on to conclusion (ii).  $\mtcl N \cap N$ satisfies clauses $(A)$, $(B)$ and $(D)$ of Definition \ref{symm} since they hold for $\mtcl N$, and $(C)$ follows from (i).

We prove now (iii). By Lemma \ref{iso1}, $\mtcl V$ satisfies clause $(A)$ of Definition \ref{symm}. 

Let us check now that $\mtcl V$ satisfies $(B)$. So, let $V_1$ and $V_2$ in $\mtcl V$ be such that $\delta_{V_1}=\delta_{V_2}$. We must show that $(V_1, \in, P)$ and $(V_2, \in, P)$ are isomorphic and that the corresponding isomorphism fixes $V_1 \cap V_2$. Without loss of generality we can assume that $\delta_{V_1}< \delta_{N}$ (note that if $\delta_{V_1}\geq  \delta_N$, then $V_1$ and $V_2$ are elements of $\mtcl N$) and that $V_1$ or $V_2$ are not in $N$. Note also that $\mtcl V \cap N \subseteq \mtcl W$: If $W \in\mtcl W$, $N' \in\mtcl N$, $\d_{N'}=\d_N$, and $\Psi_{N, N'}(W)\in N$, then $\Psi_{N, N'}(W)\in N\cap N'$ and therefore $\Psi_{N, N'}(W) = W$ as $\Psi_{N, N'}$ is the identity on $N\cap N'$. It follows that there are $N_1$ and $N_2$, both in $\mtcl N$,  such that $\delta_{N_i} = \delta_{N}$ and $V_i \in N_i$ ($i \in\{1,2\}$). Let $V'_i=\Psi_{N_i, N}(V_i)$ ($i\in\{1, 2\}$). It follows that the map $\Psi_{N, N_2}\circ \Psi_{V'_1, V'_2}\circ (\Psi_{N_1, N}\restr V_1)$ is an isomorphism $\Psi_{V_1, V_2}$ between $(V_1, \in, P)$ and $(V_2, \in, P)$, and of course it is the unique isomorphism between these structures.  Let $\tld V_1 = \Psi_{N_1, N_2}(V_1)=\Psi_{N, N_2}(V'_1)$, and note that $\Psi_{V_1, V_2}=\Psi_{\tld V_1, V_2}\circ (\Psi_{N_1, N_2}\restr V_1)$. To see that $\Psi_{V_1, V_2}$ fixes $V_1\cap V_2$, note that if $x\in   V_1\cap V_2$, then $x=\Psi_{N_1, N_2}(x)=x$ (since $x\in N_1\cap N_2$), and therefore $\Psi_{V_1, V_2}(x)=\Psi_{\tld V_1, V_2}(x)=x$. The last equality holds since $x\in \tld V_1\cap V_2$ and since $\Psi_{\tld V_1, V_2}$ fixes $\tld V_1\cap V_2$ (this is true since $\Psi_{\tld V_1, V_2}=\Psi_{N, N_2}(\Psi_{V'_1, V'_2})$ and since $\Psi_{V'_1, V'_2}$ fixes $V'_1\cap V'_2$). 

The next step is to prove that $\mtcl V$ satisfies $(C)$. So, assume that  $V_1$, $V_2$  are elements of $\mtcl V$ such that $\delta_{V_1} < \delta_{V_2}$. We must show that there is a $V_3$ in $\mtcl V$ containing $V_{1}$ and such that $\delta_{V_2}=\delta_{V_3}$. Note that if $\delta_{V_1} \geq \delta_{N}$, then $V_1$ and $V_2$ are elements of the symmetric system $\mtcl N$ and we are done. Also note that if $\d_{V_2} \geq \delta_{N}> \delta_{V_1}$, then $V_2\in\mtcl N$ and there is some $N_3$ in $\mtcl N$ such that $V_1\in N_3$ and $\delta_{N_3}=\delta_N$. In particular there is some $V_3\in\mtcl N$ such that $\d_{V_3}=\d_{V_2}$ and $N_3\in V_3$, and therefore such that $V_1\in V_3$. So, we may assume that $\delta_{V_2}< \delta_{N}$ and that there are $W_1$, $W_2$ in $\mtcl W$, together with $N_1$, $N_2$ in $\mtcl N$, such that $V_1 = \Psi_{N, N_1}(W_1)$ and $V_2 = \Psi_{N, N_2}(W_2)$. Since $\mtcl W$ is a symmetric system, there exists some $W_3 \in \mtcl W$ such that $W_1 \in W_3$ and $\d_{W_3}=\d_{W_2}$. It suffices to let $V_3= \Psi_{N, N_1}(W_3)$. 

Finally, we check  that
$\mtcl V$ satisfies $(D)$.  Let $V_1$, $V_2$ and $V_3$ be elements of $\mtcl V$ such that $\delta_{V_1} < \delta_{V_2}= \delta_{V_3}$ and $V_1 \in V_2$. We must show that $\Psi_{V_2,V_3}(V_1) \in \mtcl V$.  Note that if $\d_{V_1} \geq \d_N$, then $V_1$, $V_2$ and $V_3$ are in $\mtcl N$ and we are done.  So, we can assume that $\delta_{V_1} < \delta_{N}$. Now note that if $\delta_{V_1} < \delta_{N} \leq \d_{V_2}$, then there are $N_2$, $N_3$ $\in \mtcl N$ and $W \in \mtcl W$  such that $V_1 = \Psi_{N, N_2}(W)$ and such that, for all $j \in \{2, 3\}$, $\delta_{N_j}=\delta_N$ and $N_j \subseteq V_j$, and $N_3=\Psi_{V_2, V_3}(N_2)$ if $\d_{N_2}<\d_{V_2}$. Hence, $\Psi_{V_2, V_3}(V_1)= \Psi_{N_2, N_3}(V_1)=\Psi_{N_2, N_3}(\Psi_{N, N_2}(W))= \Psi_{N, N_3}(W) \in \mtcl V$. The last case of this proof is when $\d_{V_2} < \d_N$.  In this case there are $W_i \in \mtcl W$ and $N_i \in \mtcl N$ ($i \in \{1, 2, 3 \}$) such that $\d_{N_i}=\d_N$ and $V_i= \Psi_{N, N_i}(W_i)$. Furthermore, since $V_1 \in N_1 \cap N_2$ and $\Psi_{N_1, N_2}$ fixes $N_1 \cap N_2$, we also have that $\Psi_{N_2, N}(V_1)= \Psi_{N_2, N}(\Psi_{N_1, N_2}(V_1)) = \Psi_{N_1, N}(V_1) = W_1$.
Since $\mtcl W$ is a symmetric system, we know that $W_4:= \Psi_{W_2, W_3}(W_1)$ is in $\mtcl W$. It thus follows that
$\Psi_{V_2, V_3}(V_1)=   \Psi_{W_3, V_3}(\Psi_{W_2, W_3}((\Psi_{V_2, W_2}(V_1)))=  \Psi_{W_3, V_3}(\Psi_{W_2, W_3}(\Psi_{N_2, N}(V_1))) = \Psi_{W_3, V_3}(\Psi_{W_2, W_3}(W_1)) = \Psi_{N, N_3}(W_4) \in \mtcl V$ (The second equality is true since $\Psi_{V_2, W_2}$ is the same thing as $\Psi_{N_2, N}\restr V_2$, the third equality is true since $\Psi_{N_2, N}(V_1) = W_1$, and the last equa\-lity is true since $W_4= \Psi_{W_2, W_3}(W_1)$ and since $\Psi_{W_3, V_3}$ is the restriction of  $\Psi_{N, N_3}$ to $W_3$.)
\end{proof}

\begin{lemma}\label{iso3}

Let $P\sub H(\k)$ and let $\mtcl N_0=\{N^0_i\,:\,i<m\}$ and $\mtcl N_1=\{N^1_i\,:\,i<m\}$ be $P$--symmetric systems.
Suppose that $(\bigcup\mtcl N_0)\cap(\bigcup\mtcl N_1)=X$ and that there is an isomorphism $\Psi$ between the  structures
$\la \bigcup_{i<m}N^0_i,\in, P, X, N^0_i\ra_{i<m}$ and $\la \bigcup_{i<m}N^1_i,\in,  P, X, N^1_{i}\ra_{i<m}$ fixing $X$. Then  $\mtcl N_0\cup\mtcl N_1$ is a $P$--symmetric system.
\end{lemma}

\begin{proof}
Obviously, $\mtcl N_0\cup\mtcl N_1$ is a finite set of countable elementary substructures of $(H(\kappa), \in, P)$. We will check that this union also satisfies clauses  $(B)$, $(C)$ and  $(D)$ of Definition \ref{symm}. 

We start with clause $(B)$. We must show that if $i_0$, $i_1< m$ are such that $\d_{N^0_{i_0}}=\d_{N^1_{i_1}}$, then the isomorphism
$\Psi_{N^0_{i_0}, N^1_{i_1}}:=\Psi\circ\Psi_{N^0_{i_0}, N^0_{i_1}}$ fixes $N^0_{i_0}\cap N^1_{i_1}$. 
Now, if $x\in N^0_{i_0}\cap N^1_{i_1}$, then $x\in X\cap N^0_{i_0}$, which implies that $\Psi(x)=x\in N^1_{i_0}\cap N^1_{i_1}$ as $\Psi$ is an isomorphism between the structures $\la\bigcup_{i<m}N^0_i,\in, X, N^0_i\ra_{i<m}$ and $\la \bigcup_{i<m}N^1_i,\in, X, N^1_i\ra_{i<m}$. But then $x \in N^0_{i_0}\cap N^0_{i_1}$ again by the fact that $\Psi$ is an isomorphism between $\la\bigcup_{i<m}N^0_i,\in, X, N^0_i\ra_{i<m}$ and $\la \bigcup_{i<m}N^1_i,\in, X, N^1_i\ra_{i<m}$, which implies that $\Psi_{N^0_{i_0}, N^0_{i_1}}(x)=x$ and hence that $((\Psi\restr N^0_{i_1}) \circ\Psi_{N^0_{i_0}, N^0_{i_1}})(x) = \Psi_{N^0_{i_0}, N^1_{i_1}}(x)=x$. 

As to $(C)$, it suffices to note that the existence of $\Psi$ implies that $\{\delta_{N^0_i}\,:\,i<m\}= \{\d_{N^1_i}\,:\,i<m\}$.

Finally, we check that $\mtcl N_0\cup\mtcl N_1$ satisfies $(D)$ of Definition \ref{symm}. So, let $M_1$, $M_2$, $M_3 \in \mtcl N_0\cup\mtcl N_1$ be such that $M_2 \in M_1$ and $\delta_{M_1} = \delta_{M_3}$. We must verify that $\Psi_{M_1, M_3}(M_2)$ is also in $\mtcl N_0\cup\mtcl N_1$. Without loss of generality we may assume that there are indices $i$, $j \in \{1,2,3\}$ such that $M_i \in \mtcl N_0$  and $M_j \in \mtcl N_1$ (otherwise, the proof follows from the fact that $\mtcl N_0$ and $\mtcl N_1$ satisfy clause $(D)$). The case when $M_1$ and $M_3$ are both in $\mtcl N_1$ and $M_2$ is in $\mtcl N_0$ can be treated as follows. First note that there exist some $i_1$ and $i_2$ such that $M_1= N^1_{i_1}$ and $M_2 = N^{0}_{i_2}$. As $M_2 \in M_1$ and $\Psi$ is an isomorphism fixing $X$ (in particular, $M_2$), $N^{0}_{i_2} \in N^{0}_{i_1}$. But $\Psi_{N^0_{i_2}, N^1_{i_2}}=\Psi_{N^0_{i_1}, N^1_{i_1}} \restr N^0_{i_2}$ and this isomorphism also fixes $M_2$. So, $M_1$, $M_2= N^{1}_{i_2}$ and $M_3$ are elements of $\mtcl N_1$. The last case that needs to be considered is when $M_3$ is in  $\mtcl N_0$ and $M_1$ is in  $\mtcl N_1$. Just as before, we can ensure the existence of $i_1$, $i_2$ and $i_3$ such that $M_3 = N^{0}_{i_3}$, $M_1=N^1_{i_1}$ and $M_2 =N^{1}_{i_2}$. Let $i_4$ be such that $N^1_{i_4}=\Psi_{N^1_{i_1}, N^1_{i_3}}(N^1_{i_2})$ (recall that $\mtcl N_1$ satisfies clause $(D)$ of Definition \ref{symm}) and note that $N^0_{i_4}= \Psi_{M_1,M_3}(M_2)$.
\end{proof}

\section{Finite support forcing iterations with symmetric systems as side conditions}\label{a_general_construction}
In this section we describe a general construction of a  $\k$--sequence $\la \mtcl P_\a:\a\leq\k\ra$ of partial orders. This construction will depend on a predicate $P\sub H(\k)$, to be fixed at the outset, together with both a sequence $\la\dot{\mtcl Q}_\a\,:\,\a<\k\ra$ such that each $\dot{\mtcl Q}_\a$ is a $\mtcl P_\a$--name for a poset on $\k$ and a sequence $\la\dot{\mtcl R}_\a\,:\,\a<\k\ra$ such that each $\dot{\mtcl R}_\a$ is a $\mtcl P_\a$--name for a relation included in $([H(\k)]^{\al_0})^V\times \dot{\mtcl Q}_\a$ satisfying a certain closure property.\footnote{In our application, in Section \ref{the_forcing_construction}, $\dot{\mtcl R}_\a$ will be a name for a relation holding about a typical pair $(N, \n)$ exactly when $\n$ is an $(N[\dot G_\a],\, \dot{\mtcl Q}_\a)$--generic condition.}  As in Section \ref{symmetric_systems}, $\k$ can be taken in this section to be any cardinal at least $\o_2$. 

Let $P\sub H(\k)$ be a fixed predicate. We are going to describe what it means for a sequence $\la (\mtcl P_\a, \leq_\a)\,:\,\a\leq\k\ra$ of partial orders to be  \emph{the finite support iteration with $P$--symmetric systems as side conditions based on $\la (\dot{\mtcl Q}_\a, \dot{\mtcl R}_\a) \,:\,\a<\k\ra$}. This description will stretch up until Subsection \ref{general_facts}. As we will see, the description will specify one unique object (for fixed parameters $P$ and $\la (\dot{\mtcl Q}_\a, \dot{\mtcl R}_\a) \,:\,\a<\k\ra$). Hence our use above of the article `the'.


To start with, $\mtcl P_0$ must consist of all pairs of the form  

\begin{itemize}
\it[(a)] $(\emptyset, \{(N_i, 0)\,:\, i < m\})$, where $\{N_i\,:\,i<m\}$ is a $P$--symmetric system.
\end{itemize}

Given $\mtcl P_0$--conditions $q_\e=(\emptyset, \{(N^\e_i, 0)\,:\,i<m_\e\})$ for $\e\in\{0, 1\}$, $q_1 \leq_0 q_0$ if $\{ N^0_i \,:\,i<m_0\}\sub \{N^1_i \,:\,i<m_1\}$.

In the definition of $\mtcl P_0$--condition we have used the empty set in a completely vacuous way. These (vacuous)
$\emptyset$'s are there to ensure that the (uniformly defined) operation of restricting a condition in a
(further) $\mtcl P_\a$ to an ordinal $\b<\a$ yields a condition in $\mtcl P_\b$ when applied to any condition in any $\mtcl P_\a$ and to $\b=0$.

\begin{notation} If $q$ is an ordered pair, we denote the first component of $q$ by $F_q$ and the second component of $q$ by $\D_q$. Also, if $q$ is an ordered pair such that $F_q$ is a function and $\D_q$ is a relation and $\x$ is an ordinal, \emph{the restriction of $q$ to $\x$}, denoted by $q\av_\x$, is defined
as the pair $$q\av_\x:=(F_q\restr\x,\,\{(N, min\{\b,\,\x\})\,:\,(N, \b)\in \D_q\})$$ 
\end{notation} 

Let $\a\leq \k$, $\a>0$, and suppose that we have defined $\mtcl P_\x$ for all $\x<\a$. Suppose also
that if $\x<\a$ and $q \in \mtcl P_\x$, then $q$ is an ordered pair of the form $(F_q, \Delta_q)$, where

\begin{itemize}

\it $F_q$ is a finite function with domain included in $\x$, and

\it $\Delta_q$  is a finite relation $\{(N_i, \tau_i) :i \in n \}$ such that $dom(\D_q)=\{N_i\,:\,i<n\}$ is a $P$--symmetric system and, for all $i$, $\tau_i$ is an ordinal such that $\tau_i\leq\x$.
\end{itemize}


If $\a= \sigma +1$, we require that the following holds:

\begin{itemize} \it[(1)] $\dot{\mtcl Q}_{\sigma}$ is a $\mtcl P_{\sigma}$--name for a partial order.

\it[(2)] $\dot{\mathcal R}_{\sigma}$ is a $\mtcl P_{\sigma}$--name for a relation $R$ such that $$R\sub ([H(\k)]^{\al_0})^V\times\dot{\mtcl Q}_{\sigma}$$ and such that $(N, x')\in R$ whenever $(N, x)\in R$ and $x'$ is a $\dot{\mtcl Q}_{\sigma}$--condition extending $x$ (\emph{$R$ is downward closed with respect to $\dot{\mtcl Q}_{\sigma}$}).
\end{itemize}

The definition of $\mtcl P_{\a}$ is as follows (regardless of whether $\a$ is a successor or a limit ordinal). Conditions in $\mtcl P_{\a}$ are pairs of the form $$q= (F_q,\, \D_q)$$ with the following properties.

\begin{itemize}
\it[$(b\,0)$] $F_q$ is a finite function with $dom(F_q)\sub\a$.

\it[$(b\,1)$]  $\D_q$ is of the form $\{(N_i, \b_i)\,:\, i< m\}$ where, for all $i< m$, $\b_i\leq \alpha \cap sup(N_i\cap \k)$.\footnote{Note that $\b_i$ is always less than $\k$ (even when $\a=\k$).}

\it[$(b\,2)$] For all $\x<\a$, the restriction of $q$ to $\x$ is a condition in $\mtcl P_\x$. 

\it[$(b\,3)$] If $\x\in dom(F_q)$, then $F_q(\x)$ is a $\mtcl P_\x$--name and $q\av_\x\Vdash_\x F_q(\x)\in \dot{\mtcl Q}_\x$.

\it[$(b\,4)$] If $\x\in dom(F_q)$,  $(N, \b)\in \D_q$, $\b\geq\x+1$, and  $\x\in N$, then $$q\av_\x \Vdash_{\x} (N, F_q(\x))\in\dot{\mtcl R}_\x$$ 

\end{itemize}

Given conditions $$q^\e =(F_\e, \,\{(N^\e_i, \b^\e_i)\,:\, i < m_\e\})$$ (for $\e\in\{0, 1\}$) in $\mtcl P_\a$, we will say that $q^{1} \leq_{\a} q^{0}$ if and only if the following holds.

\begin{itemize}

\it[$(c\,1)$] For all $\x<\a$, $q^1\av_\x \leq_\x q^0\av_\x$.

\it[$(c\,2)$] $dom(F_0)\sub dom(F_1)$ and, for all $\x\in dom(F_0)$, $$q^1\av_\x\Vdash_{\x} F_1(\x)\leq_{\dot{\mtcl Q}_\x} F_0(\x)$$

\it[$(c\,3)$] For all $i < m_{0}$ there is some $\widetilde{\b}_i \geq \b^0_i$ such that $(N^0_{i}, \widetilde{\b}_i) \in \D_{q^{1}}$.

\end{itemize}













\begin{notation}
Given $\a\leq\k$ and a $\mtcl P_\a$--condition $q=(F_q, \D_q)$,  $dom(F_q)$ will also be denoted by $supp(q)$ and will be called the \textit{support of $q$}.
\end{notation}

Note that if $\a<\b\leq \k$, then $\mtcl P_\a\sub\mtcl P_\b$ and every  $\mtcl P_\b$--condition $q=(F, \{(N_j, \b_j)\,:\,j<m\})$ such that $supp(q)\sub \a$ and $\b_j \leq \a$ for all $j$ is also a  $\mtcl P_\a$--condition and is in fact equal to its restriction to $\a$. 

Also note that if $\a$ is a nonzero limit ordinal, then a pair $q=(F_q, \D_q)$ is a $\mtcl P_\a$--condition if and only if it satisfies ($b\,0$)--($b\,2$).

We will sometimes talk about `finite support iterations with $P$--symmetric systems as side conditions' in contexts where the sequence $\la (\dot{\mtcl Q}_\a, \dot{\mtcl R}_\a) \,:\,\a<\k\ra$ is irrelevant. We may also omit $P$ when it is not relevant.

\subsection{General facts}\label{general_facts}

In this subsection we present several facts that all finite support iterations with $P$--symmetric systems as side conditions will satisfy. 

In arguments involving this type of construction (for example in the proof of Theorem \ref{mainthm}) one naturally finds oneself having to prove that natural amalgamations of conditions are themselves conditions. The following five lemmas give some basic properties of such amalgamations that are often used in those types of arguments. 

We start with our first amalgamation lemma. An immediate consequence of this lemma (Corollary \ref{compll1}) is that our use of the term ``iteration'' is appropriate; more specifically, it follows from the lemma that  if $\la\mtcl P_\x\,:\,\x\leq\k\ra$ is a finite support iteration with $P$--symmetric systems as side conditions, then it is a forcing iteration in the sense that $\mtcl P_\a$ is a complete suborder of 
$\mtcl P_\b$ whenever $\a<\b$. 

\begin{lemma}\label{compll}
Let $P\sub H(\k)$ and let $\la\mtcl P_\a\,:\,\a\leq\k\ra$ be the finite support iteration with $P$--symmetric systems as side conditions based on $\la (\dot{\mtcl Q}_\a, \dot{\mtcl R}_\a) \,:\,\a<\k\ra$. Let $\a \leq \b \leq \k$. If $q=(F_q, \D_{q})\in \mtcl P_\a$, $r=(F_r, \D_{r}) \in \mtcl P_\b$, and $q \leq_\a
r|_\a$, then $$r \wedge_\a q:=(F_q\cup(F_r\restr [\a,\, \b)), \D_{q}\cup \D_{r})$$ is a condition in $\mtcl P_\b$ extending $r$.
\end{lemma}

\begin{proof}
The proof is a mechanical verification and proceeds by induction on $\b\geq\a$. The crucial point is the use of the markers $\b_i$ in the definition of the forcing. New side conditions $(N_i, \b_i)$ appearing in $\D_q$ may well have the property that $N_i\cap[\a,\,\b)\neq\emptyset$, but they will not impose any problematic promises -- coming from clause $(b\,4)$ in  the definition -- on ordinals $\x$ occurring in $dom(F_r\restr[\a,\,\b))$. The reason is simply that $\b_i\leq\a$. The details of the proof are as follows.

Note that the case $\beta = \alpha$ is obvious, so let us start by assuming that $\beta$ is the successor of $\sigma$ with $\sigma \geq \alpha$. Clearly, $r \wedge_\a q$ satisfies clauses $(b\,0)$ and $(b\,1)$ in the definition of $\mtcl P_{\s+1}$. By the inductive hypothesis we know that the restriction of $r \wedge_\a q$ to $\sigma$, that is, $$(r\wedge_\a q)|_\s= (F_q\cup(F_r\restr [\a,\, \s)), \D_{q}\cup \D_{r\av_\s})$$ is a condition in $\mtcl P_\s$ extending $r\av_\s$. Therefore, $r \wedge_\a q$ also satisfies $(b\,2)$. If $\s\notin dom(F_r)$, then $r \wedge_\a q$ is a condition in $\mtcl P_{\s+1}$, since clause $(b\,4)$ is automatically satisfied. If $\s\in dom(F_r)$, then $(r\wedge_\a q)\av_\s$ forces in $\mtcl P_\s$ that $F_r(\s)$ is in $\dot{\mtcl Q}_\s$ (since $r\av_\s$ forces this and $(r\wedge_\a q)\av_\s$ extends $r\av_\s$). This concludes the verification of $(b\,3)$ for $q\wedge_\a r$. Now we check that $$(r \wedge_\a q)\av_\s\Vdash_\s (N, F_r(\sigma))\in\dot{\mtcl R}_\s$$  for all those $N$ such that $(N, \sigma+1) \in \D_{q}\cup \D_{r}$ and $\s+1\in N$. But such an $N$ is such that $(N, \sigma+1) \in \D_{r}$. Since $r$ satisfies $(b\,4)$, $r\av_\s$ (and hence, the restriction of $r \wedge_\a q$ to $\sigma$) forces that $(N, F_r(\sigma))$ is in $\dot{\mtcl R}_\s$. Finally note that the inductive hypothesis
and the inclusion $\D_r \subseteq \D_{r \wedge_\a q}$ together imply that $r \wedge_\a q$ extends $r$. The case when $\beta$ is a nonzero limit ordinal follows directly from the inductive hypothesis.
\end{proof}

\begin{corollary}\label{compll1}
For every finite support iteration $\la\mtcl P_\a\,:\,\a\leq\k\ra$ with symmetric systems as side conditions and for all $\a<\b\leq\k$, every maximal antichain in $\mtcl P_\a$ is a maximal antichain in $\mtcl P_\b$, and therefore $\mtcl P_\a$ is a complete suborder of $\mtcl P_\b$.
\end{corollary}


\begin{lemma}\label{amalg1}
Let $P\sub H(\k)$ and let $\la\mtcl P_\a\,:\,\a\leq\k\ra$ be the finite support iteration with $P$--symmetric systems as side conditions based on $\la (\dot{\mtcl Q}_\a, \dot{\mtcl R}_\a) \,:\,\a<\k\ra$. Let $q_1=(F_1, \, \D_1)$ and $q_2=(F_2, \, \D_2)$ be conditions in $\mtcl P_{\a+1}$ such that there is a $\mtcl P_\a$--name $\dot x$, a condition $r=(F_r, \D_r)$ in $\mtcl P_{\a}$, and a finite set $\{M_j : j \in  n\}$ with the following properties:
\begin{itemize}
\it[(a)] $\alpha+1 \leq sup(M_j \cap \k)$ and $(M_j,\a) \in \D_r$ for all $j < n$,
\it[(b)] $r$ extends both $q_1\av_\a$ and $q_2\av_\a$,
\it[(c)] $\a\in dom(F_1)\cap dom(F_2)$ and $r$ forces in $\mtcl P_\a$ that $\dot x$ extends both $F_1(\a)$ and $F_{2}(\a)$ in $\dot{\mtcl Q}_\alpha$, and
\it[(d)] $r\Vdash_\a (M_j, \dot x)\in\dot{\mtcl R}_\a$ for all $j < n$ such that $\a\in M_j$.

\end{itemize}
Then, $$q_3=(F_r\cup\{\la \a, \dot x \ra\}, \, \D_{r} \cup \D_{1} \cup \D_{2} \cup \{(M_j, \a+1) : j \in  n\})$$ is a condition in $\mtcl P_{\a+1}$ extending both $q_1$ and $q_2$.
\end{lemma}



\begin{proof}
First we check that $q_3$ is in $\mtcl P_{\a+1}$. It follows from $(a)$ and $(b)$ that the restriction of $q_3$ to $\a$ is equal to $r$, and hence that $q_3$ satisfies clauses $(b\,0)$--$(b\,2)$. Condition $(b\,3)$ for $q_3$ follows from (c).
Finally we must show that $r$ forces  $(N, \dot x)\in\dot{\mtcl R}_\a$ for all those $N$ such that $(N, \a+1)$ is in $\D_{1}\cup \D_{2} \cup \{(M_j, \a+1) : j \in  n\}$ and such that $\a \in N$ (recall that if $(N,\g) \in \D_r$, then $\gamma \leq \alpha$). But for such an $N$, if $(N, \a+1) \in \D_{i}$ ($i \in \{1,2\}$), it suffices to recall that $r \leq_\a q_i\av_\a$, that $r$ forces  $\dot x \leq_{\dot{\mtcl Q}_\alpha} F_i(\a)$, and that (by clause $(b\,4)$ applied to $q_i$) $q_i\av_\a$ forces $(N, F_i(\a))\in\dot{\mtcl R}_\a$. Hence, $r$ forces $(N, \dot x)\in\dot{\mtcl R}_\a$ by the downward closure of  $\dot{\mtcl R}_\a$ with respect to $\dot{\mtcl Q}_\a$. The case when $(N, \a+1) \in \{(M_j, \a+1) : j \in  n\}$ follows from $(d)$. Finally note that $(b)$, $(c)$ and the inclusion $\D_i \subseteq \D_{1} \cup \D_{2} \cup \{(M_j, \a+1) : j \in  n\}$ imply together that $q_3$ extends $q_i$ for $i\in\{1, 2\}$.
\end{proof}

Exactly the same proof establishes the following variant of Lemma \ref{amalg1}.

\begin{lemma}\label{amalg1.5}
Let $P\sub H(\k)$ and let $\la\mtcl P_\a\,:\,\a\leq\k\ra$ be the finite support iteration with $P$--symmetric systems as side conditions based on $\la (\dot{\mtcl Q}_\a, \dot{\mtcl R}_\a) \,:\,\a<\k\ra$. Let $q_1=(F_1, \, \D_1)$ and $q_2=(F_2, \, \D_2)$ be conditions in $\mtcl P_{\a+1}$, $r=(F_r, \D_r)$ a condition in $\mtcl P_{\a}$, and $\{M_j : j \in  n\}$ a finite set with the following properties:
\begin{itemize}
\it[(a)] $\alpha+1 \leq sup(M_j \cap \k)$ and $(M_j,\a) \in \D_r$ for all $j < n$,
\it[(b)] $r$ extends both $q_1\av_\a$ and $q_2\av_\a$, and
\it[(c)] $\a\notin dom(F_1)\cup dom(F_2)$.
\end{itemize}
Then, $$q_3=(F_r, \, \D_{r} \cup \D_{1} \cup \D_{2} \cup \{(M_j, \a+1) : j \in  n\})$$ is a condition in $\mtcl P_{\a+1}$ extending both $q_1$ and $q_2$.

Suppose, in addition, that $\dot x$ is a $\mtcl P_\a$--name  such that \begin{itemize}

\it[(d)] $r\Vdash_\a (M_j, \dot x)\in\dot{\mtcl R}_\a$ for all $j < n$ such that $\a\in M_j$, and \it[(e)] $r\Vdash_\a (N, \dot x)\in\dot{\mtcl R}_\a$ for all $N$ such that $(N, \a+1)\in \D_1\cup\D_2$ and $\a\in N$.\end{itemize} 

Then, $$q'_3=(F_r\cup\{\la \a, \dot x \ra\}, \, \D_{r} \cup \D_{1} \cup \D_{2} \cup \{(M_j, \a+1) : j \in  n\})$$ is a condition in $\mtcl P_{\a+1}$ extending both $q_1$ and $q_2$.
\end{lemma}


\begin{lemma}\label{amalg2}
Let $P\sub H(\k)$ and let  $\la\mtcl P_\a\,:\,\a\leq\k\ra$ be the finite support iteration with $P$--symmetric systems as side conditions. Assume that $0\leq \sigma < \alpha \leq \kappa$. Let $q_\x=(F_\x,\D_\x)$ $(\x \in \{ 1,2 \})$ be  conditions in $\mtcl P_\a$ such that $supp(q_1)\cup supp(q_2) \subseteq \sigma$ and such that there exists a condition $r =(F_r, \D_r) \in \mtcl P_{\s}$ extending both $q_1\av_\s$ and $q_2\av_\s$. Then $q_1$ and $q_2$ are compatible in $\mtcl P_\a$.
\end{lemma}

\begin{proof}
Define $q_3=(F_r, \D_r\cup \D_{1} \cup \D_{2})$. We prove by induction on $\b$, $\s\leq \b\leq \a$, that $q_3\av_\b$ is a condition in $\mtcl P_\b$ extending $q_1\av_\b$ and $q_2\av_\b$. The successor step follows from Lemma \ref{amalg1.5}.
\end{proof}

\begin{lemma}\label{amalg3}
Let $P\sub H(\k)$ and let $\la\mtcl P_\a\,:\,\a\leq\k\ra$ be the finite support iteration with $P$--symmetric systems as side conditions based on $\la (\dot{\mtcl Q}_\a, \dot{\mtcl R}_\a) \,:\,\a<\k\ra$. Let $0< \b \leq \kappa$. Given conditions $q_\x=(F_\x,\D_\x)$ $(\x \in \{ 0,1 \})$ in $\mtcl P_\b$, let $Z_{\x} = supp(q_\x)\cup(\b \cap \bigcup dom(\D_{q_\x}))$. Let $\a\leq \b$ be such that $Z_{0} \cap Z_{1} \subseteq \a$, and assume there is a condition $r=(F_r, \D_r)$ in $\mtcl P_\a$ extending $q_0 \av_\a$ and $q_1 \av_\a$. Let $F_r^{0,1} = F_r\cup (F_0\restr[\a,\,\b))\cup (F_1\restr[\a,\,\b))$. 
Then the natural amalgamation of $r$, $q_0$ and $q_1$, i.e.,
$$(q_0 \wedge q_1) \wedge_\a r:= (F_r^{0,1}, \D_r \cup \D_0\cup \D_1)$$ is a $\mtcl P_\b$--condition extending $q_0 $ and $q_1$.
\end{lemma}

\begin{proof}
The proof is by induction on $\beta\geq \alpha$. Note that the case $\beta = \alpha$ is clear, so let us start by assuming that $\beta$ is the successor of $\sigma$ with $\sigma \geq \alpha$. Clearly, $(q_0 \wedge q_1) \wedge_\a r$ satisfies clauses $(b\,0)$ and $(b\,1)$. Using the inductive hypothesis we know that the restriction of the amalgamation to $\sigma$ is a condition in
$\mtcl P_\s$ which extends both $q_0\av_\s$ and $q_1\av_\s$. In particular, if $\s\in dom(F^{0, 1}_r)$, then $((q_0 \wedge q_1) \wedge_\a r)\av_\s$ forces $F^{0, 1}_r(\s)\in\dot{\mtcl Q}_\s$. Therefore, $ (q_0 \wedge q_1) \wedge_\a r$ also satisfies $(b\,2)$ and $(b\,3)$.
Let us assume now  that $\sigma \in dom(F^{0, 1}_r)$. In fact, since $supp(q_0) \cap supp(q_1) \subseteq \alpha$ and $\sigma \geq \alpha$, we may assume that $\sigma \in supp(q_0) \setminus supp(q_1)$ (the proof when $\sigma \in supp(q_1)$ is identical). We must show that the condition $((q_0 \wedge q_1) \wedge_\a r)\av_\s$ forces $(N, F_r^{0,1}(\sigma))\in\dot{\mtcl R}_\s$ for all those $N$ such that $(N, \sigma+1) \in \D_{0}\cup \D_{1}$ and $\s\in N$.  Since $\s \geq \alpha$ and $Z_{0} \cap Z_{1} \subseteq \a$, it follows for such an $N$ that $(N, \sigma+1) \in \D_0$. Since $q_0$ satisfies $(b\,4)$ and  $F_r^{0,1}(\sigma)= F_0(\sigma)$, $q_0\av_\s$ (and hence, $((q_0 \wedge q_1) \wedge_\a r)\av_\s$) forces $(N, F_r^{0,1}(\sigma))\in\dot{\mtcl R}_\s$.  Finally note that the inductive hypothesis, the choice of
$F_r^{0,1}$ and the inclusion $\D_0 \cup \D_1 \subseteq \D_{(q_0 \wedge q_1) \wedge_\a r}$ together imply that $(q_0 \wedge q_1) \wedge_\a r$ extends $q_0$ and $q_1$. The case when $\beta$ is a nonzero limit ordinal follows directly from the inductive hypothesis.
\end{proof}

The final result in this section applies, assuming $\textsc{CH}$, to iterations with $P$--symmetric systems as side conditions for $P$ for which there is  a bijection $\varphi:H(\k)\into \k$ definable in $(H(\k), \in, P)$ and such that, moreover, each $\dot{\mtcl Q}_\a$ is forced to be a poset on $\o_1^V$.  It shows that all such iterations have the $\al_2$--chain condition. We will actually show that these forcings are $\al_2$--Knaster.\footnote{A forcing $\mtbb P$ is $\mu$--$Knaster$ if every
subset of $\mtbb P$ of cardinality $\mu$ includes a subset of cardinality
$\mu$ of pairwise compatible conditions.}

\begin{lemma}\label{cc} ($\textsc{CH}$) Let $\la\mtcl P_\a\,:\,\a\leq\k\ra$ be the finite support iteration with $P$--symmetric systems as side conditions based on  $\la (\dot{\mtcl Q}_\a, \dot{\mtcl R}_\a) \,:\,\a<\k\ra$. Suppose that \begin{itemize} \it there is  a bijection $\varphi:H(\k)\into \k$ definable in $(H(\k), \in, P)$, and that \it for all $\a<\k$, $\mtcl P_\a$ forces $\dot{\mtcl Q}_\a\sub\o_1^V$. \end{itemize}

Then for every ordinal $\a\leq \k$, $\mtcl P_\a$ is $\al_2$--Knaster.
\end{lemma}

\begin{proof}
The proof is by induction on $\a$ and involves standard $\D$--system and pigeonhole principle arguments. The conclusion for $\a=0$ follows from $\textsc{CH}$: Suppose $m<\o$ and $q_\x=\{N^\x_i\,:\,i<m\}$ is a $\mtcl P_0$--condition for each $\x<\o_2$. For notational convenience we are identifying a $\mtcl P_0$--condition $q$ with $dom(\D_q)$, which is fine for this proof. By $\textsc{CH}$ we may assume that $\{\bigcup_{i<m}N^\x_i\,:\,\x<\o_2\}$ forms a $\D$--system with root $X$. Furthermore, by $\textsc{CH}$ we may assume, for all $\x$, $\x'<\o_2$, that the structures
$\la \bigcup_{i<m}N^\x_i,\in, P,  X, N^\x_i\ra_{i<m}$ and $\la \bigcup_{i<m}N^{\x'}_i,\in, P,  X, N^{\x'}_{i}\ra_{i<m}$  are isomorphic and that the corresponding isomorphism fixes $X$. The first assertion follows from the fact that there are only $\al_1$--many isomorphism types for such structures. For the second assertion note that, if $\Psi$ is  the unique isomorphism between $\la \bigcup_{i<m}N^\x_i,\in, P,  X, N^\x_i\ra_{i<m}$ and $\la \bigcup_{i<m}N^{\x'}_i,\in,  P, X, N^{\x'}_{i}\ra_{i<m}$, then the restriction of $\Psi$ to $X\cap\k$ has to be the identity on $X\cap\k$. Since  there is  a bijection $\varphi:H(\k)\into \k$ definable in $(H(\k), \in, P)$, we have that $\Psi$ fixes $X$ if and only if it fixes $X\cap\k$. It follows that $\Psi$ fixes $X$.
Hence, by Lemma \ref{iso3} we have, for all $\x$, $\x'<\o_2$, that $q_\x\cup q_{\x'}$ extends both $q_\x$ and $q_{\x'}$.

For $\a=\s+1$, suppose $q_\x$ is a $\mtcl P_{\s+1}$--condition for each $\x<\o_2$. Suppose, without loss of generality, that each $q_\x$ is of the form $q_\x=(F_\x,\, \D_\x)$ with $\sigma \in dom(F_\x)$ (the proof in the case that there are $\al_2$--many $q_\x$ of the form $(F, \D)$ with $dom(F)\sub\s$ follows directly from Lemma \ref{amalg1.5}). By extending $q_\x$ if necessary, we may assume that each $F_\xi(\sigma)$ is the canonical $\mtcl P_\s$--name for an actual ordinal in $\o_1$. We may also assume by our induction hypothesis that all $q_\x\av_\s$ are pairwise compatible. Let $r_{\x, \x'}\in\mtcl P_\s$ be a condition extending both $q_\x\av_\s$ and $q_{\x'}\av_\s$ for all $\x<\x'<\o_2$. Now find a set $I\sub\o_2$ of size $\al_2$ such that for all $\x<\x'$ in $I$, $F_\xi(\sigma)=F_{\x'}(\sigma)$. By Lemma \ref{amalg1} it follows now, for all such $\x$, $\x'$, that the natural amalgamation of $r_{\x, \x'}$, $q_\x$ and $q_{\x'}$ is a $\mtcl P_{\s+1}$--condition extending $q_\x$ and $q_{\x'}$.

For $\a$ a nonzero limit ordinal, suppose $q_\x$ is a $\mtcl P_\a$--condition for all $\x<\o_2$. Suppose first that $cf(\a) \neq \o_2$. There is then some $\s<\a$ such that $I=\{\x<\o_2\,:\,supp(q_\x)\sub\s\}$ has size $\al_2$. By induction hypothesis there is some $I'\sub I$ of size $\al_2$ such that all $q_\x \av_\s$ (for $\x\in I'$) are pairwise compatible in $\mtcl P_\s$. But now it follows from Lemma \ref{amalg2} that $q_\x$ and $q_{\x'}$ are compatible in $\mtcl P_\a$ for all $\x<\x'$ in $I'$.

Finally, suppose $cf(\a)=\o_2$. For each $\x < \o_2$, let $Z_{\x}$ be equal to the union of the sets $supp(q_\x)$ and $\a \cap \bigcup dom(\D_{q_\x})$. By $\textsc{CH}$ we may find $I\sub\o_2$ of size $\al_2$ such that $\{Z_{\x} \,:\,\x\in I\}$ forms a $\D$--system with root $X$. 

Let now $\s<\a$ be such that $X\sub \s$ ($\s$ exists by $cf(\a)\geq\o_1$). By induction hypothesis we may assume that all $q_\x\av_\s$ are pairwise compatible in $\mtcl P_\s$. For all $\x<\x'$ in $I$ let $r_{\x, \x'}$ be a condition in $\mtcl P_\s$ extending $q_\x\av_\s$ and $q_{\x'}\av_\s$. From Lemma \ref{amalg3} it follows then, for all such $\x$, $\x'$, that the natural amalgamation of $r_{\x, \x'}$, $q_\x$ and $q_{\x'}$ is a $\mtcl P_\a$--condition extending $q_\x$ and $q_{\x'}$.
\end{proof}

\section{Proving Theorem \ref{mainthm}}\label{the_forcing_construction}

We will now proceed to the definition of the forcing $\mtcl P$ witnessing Theorem \ref{mainthm}.  The proof of Theorem \ref{mainthm} will then be given in a sequence of lemmas.

For this section, assume $\textsc{CH}$ holds and let us fix a cardinal $\k$ such that $\k^{\al_1}= \k$ and $2^{<\k}=\k$. Let $\Phi:\k\into H(\k)$ be a surjection such that for every $x$ in $H(\kappa)$, $\Phi^{-1}(\{x\})$ is unbounded in $\kappa$. Let also $\lhd$ be a well--order of $H(\k^+)$ in order type $2^\k$. The bookkeeping function $\Phi$ exists by  $2^{<\k}=\k$, and $\lhd$ exists since $\av H(\k^+)\av = 2^\k$.

Let $\la \t_\a\,:\,\a\leq \k\ra$ be the strictly increasing sequence of regular cardinals defined as $\t_0 =\av 2^{\k}\av^{+}$ and $\t_\a=|2^{sup\{\t_\beta\,:\,\b\leq \a \}}|^{+}$ if $\a>0$. For each $\a\leq\k$ let $\mtcl M^\ast_\a$ be the collection of all countable elementary substructures of $H(\t_\a)$ containing $\Phi$, $\lhd$ and $\la \t_\b\,:\,\b<\a\ra$. Let also $\mtcl M_\a=\{N^\ast\cap H(\k)\,:\,N^\ast\in\mtcl M^\ast_\a\}$ and note that if $\alpha < \beta$, then $\mtcl M^\ast_\a$ belongs to all members of $\mtcl M^\ast_\b$ containing the ordinal $\alpha$.

The forcing $\mtcl P$ witnessing Theorem \ref{mainthm}  will be $\mtcl P_\k$, where the sequence $\la\mtcl P_\a\,:\,\a\leq\k\ra$ is the finite support iteration with $\Phi$--symmetric systems as side conditions based on a certain sequence $\la(\dot{\mtcl Q}_\a, \dot{\mtcl R}_\a)\,:\,\a<\k\ra$ of pairs of names. Let $\a<\k$ be given and suppose $\mtcl P_\a$ has been defined.

In Definition \ref{fin_proper}, and throughout the rest of the paper, we will abuse notation slightly when writing things like $N[H]$: 
Given a set $N$, a partial order $\mtbb P$ and a filter $H\sub\mtbb P$, $N[H] = \{\tau_H\,:\,\tau\in N\mbox{ is a }\mtbb P\mbox{--name}\}$.
Note that we are not requiring that $\mtbb P$ be in $N$, or even that $\mtbb P\cap N$ be a definable class in $N$.

\begin{definition}\label{fin_proper}
(For $\a<\k$, in $V[G_\a]$, where $G_\a$ is a $\mtcl P_{\a}$-generic filter) Let $\mtcl Q$ be a forcing on $\omega^V_1$.
We will say that \emph{$\mtcl Q$ is $V$--finitely proper (with respect to $G_\a$)} if and only if there exists a club $D \subseteq ([H(\kappa)]^{\aleph_0})^V$ in $V$ with the following property:

If $m< \omega$ and $\{N_i \,:\, i \in m\}\sub D$ is such that $\{(N_i, \a)\,:
\,i<m\} \sub \D_u$ for some $u \in G_\alpha$ and such that $\mtcl Q \in N_i[G_\alpha]$ for all $i$, then for every $\nu \in \bigcap\{N_i \cap \omega_1 \,:\, i \in m\}$ there exists some $\nu^{\ast}$ such that $\nu^{\ast}$ extends $\nu$ in $\mtcl Q$ and is $(N_i[G_\a], \mtcl Q)$--generic for all $i$.
\end{definition}

The definition of $(\dot{\mtcl Q}_\a, \dot{\mtcl R}_\a)$ is the following: $(\dot{\mtcl Q}_\a, \dot{\mtcl R}_\a)$ is the $\lhd$--least pair of $\mtcl P_\a$--names in $H(\k^+)$ with the following properties:

\begin{enumerate} 

\it If $\Phi(\alpha)= \dot{\mtcl Q}$ is a $\mtcl P_\a$--name for a nontrivial\footnote{Nontrivial in the sense that it has some condition different from $0$.} $V$--finitely proper forcing on $\omega_1^V$, then $\dot{\mtcl Q}_\a=\dot{\mtcl Q}$.\footnote{In Lemma \ref{horribilis} we will see that $\mtcl P_\a$ preserves $\o_1$.}

\it If $\Phi(\alpha)$ is not a $\mtcl P_\a$--name for a nontrivial $V$--finitely proper forcing on $\omega_1^V$, then $\dot{\mtcl Q}_\a$ is a $\mtcl P_\a$--name for trivial forcing on $\{0\}$ (which is a $V$--finitely proper forcing on $\omega_1^V$).

\it  $\dot{\mtcl R}_\a$ is a $\mtcl P_\a$--name for the set of pairs $(N, \n)$ such that \begin{enumerate}

\it $\nu\in\dot{\mtcl Q}_\a$, and

 \it if $N\in\mtcl M_{\a+1}$, then $\nu$ is $(N[\dot G_\a], \dot{\mtcl Q}_\a)$--generic.
 
 \end{enumerate} \end{enumerate}

Note that $\dot{\mtcl R}_\a$ is forced to be downward closed with respect to $\dot{\mtcl Q}_\a$, so the construction makes sense.
Note also that if $\alpha < \beta \leq \kappa$, $N^\ast\in\mtcl M^\ast_\beta$ and $\alpha \in N^\ast$, then $\mtcl P_\alpha \in N^\ast$.


We are going to prove the relevant properties of the forcings $\mtcl P_\a$. Theorem \ref{mainthm} will follow immediately from them.

The hypotheses of Lemma \ref{cc} clearly apply to our construction. Hence, every $\mtcl P_\a$ is $\al_2$--Knaster.

\begin{lemma}\label{solid} For every $\a\leq\k$ and $q\in\mtcl P_\a$ there is an extension $q'$ of $q$ such that $F_{q'}(\x)$ is a canonical $\mtcl P_\x$--name for an ordinal in $\o_1^V$ for every $\x\in supp(q')$.\end{lemma} 

The proof of Lemma \ref{solid} is a straightforward induction using the fact that conditions in $\mtcl P_\a$ have finite support.

\begin{definition}
Given $\a\leq \kappa$, a condition $q\in\mtcl P_\a$, and a countable elementary substructure $N \prec H(\kappa)$, we will say that $q$ is $(N,\, \mtcl P_\a)$--pre-generic in case
\begin{itemize}

\it $\a < \k$ and the pair $(N, \a)$ is in $\Delta_q$, or else

\it $\a = \k$ and the pair $(N, sup(N\cap \k))$ is in $\Delta_q$.

\end{itemize}
\end{definition}

The properness of all $\mtcl P_\a$ is an immediate consequence of the following lemma.

\begin{lemma}\label{horribilis}

Suppose $\a\leq \kappa$ and $N^\ast\in\mtcl M^\ast_\a$. Let $N=N^\ast\cap H(\k)$. Then the following conditions hold.

\begin{itemize}

\it[$(1)_\a$] For every $q\in N$ there is some $q'\leq_\a q$ such that $q'$ is $(N,\, \mtcl P_\a)$--pre-generic.

\it[$(2)_\a$] If $\mtcl P_\a\in N^\ast$ and $q\in\mtcl P_\a$ is $(N,\, \mtcl P_\a)$--pre-generic, then $q$ is $(N^\ast,\, \mtcl P_\a)$--generic.

\end{itemize}

\end{lemma}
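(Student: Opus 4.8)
The plan is to prove $(1)_\a$ and $(2)_\a$ simultaneously by induction on $\a\le\k$, since the two statements feed into each other: $(1)_\a$ is what produces pre-generic conditions to start with, while $(2)_\a$ converts pre-genericity into genuine $(N^\ast,\mtcl P_\a)$-genericity, and in the limit and successor steps one will need the lower-$\a$ instances of \emph{both} parts. For the base case $\a=0$: given $q\in N$, $q$ is essentially a $\Phi$-symmetric system of members of $\mtcl M_0$; to get $q'\le_0 q$ pre-generic I simply throw $N$ itself into the system (note $N=N^\ast\cap H(\k)\in\mtcl M_0$ since $N^\ast\in\mtcl M^\ast_0$), after first closing off under the symmetry requirements $(B)$--$(D)$ — i.e. add all the $\Psi$-images forced by clause $(D)$ and all witnesses demanded by $(C)$; since $N$ is countable this is a finite closure and stays inside $\mtcl M_0$ by elementarity of $N^\ast$. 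Then $(2)_0$: a pre-generic condition contains $(N,0)$, and an easy density/mixing argument using the symmetric-system structure shows it meets every dense subset of $\mtcl P_0$ lying in $N^\ast$ — the point is that any extension of $q$ can be ``reflected into $N$'' via the isomorphisms $\Psi$.

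For the successor step $\a=\s+1$, assume the lemma holds at $\s$ (and below). For $(1)_{\s+1}$: given $q=(p^\smallfrown\la\nu\ra,\D_q)\in N$, its restriction $q|_\s$ lies in $N$, so by $(1)_\s$ find $q'\le_\s q|_\s$ that is $(N,\mtcl P_\s)$-pre-generic, hence (by $(2)_\s$, since $\mtcl P_\s\in N^\ast$) is $(N^\ast,\mtcl P_\s)$-generic. Now I must extend $q'$ to a $\mtcl P_{\s+1}$-condition with the pair $(N,\s+1)$ — or $(N,sup(N\cap\k))$ if $\s+1=\k$, but in the successor case $\a<\k$ so it's $(N,\s+1)$, legitimate provided $\s+1\le sup(N\cap\k)$, which holds when $\s\in N$; one should first arrange $\s\in N$, i.e. handle the case $\s\notin N$ separately (there $N\cap(\s+1)=N\cap\s$ and the marker machinery via clause $(b\,2)$/$(b\,1)$ lets $(N,\s+1)$ collapse harmlessly, or one uses a smaller marker). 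The real content is clause $(b\,4)$: if $\Phi(\s)=\dot{\mtcl R}$ is (forced to be) a $V$-finitely proper forcing on $\o_1$, I need a value $\nu^\ast\in\o_1$ that $q'$ forces to be $(N'[\dot G_\s],\dot{\mtcl R})$-generic simultaneously for \emph{every} $N'\in\mtcl X^\ast_{q'}$ with $N'\in\mtcl M_{\s+1}$. This is exactly where the definition of $V$-finite properness is engineered to help: in $V$ there's a club $D\sub([H(\k)]^{\al_0})^V$ such that for any finite $\{N'_i\}\sub D$ contained in $\mtcl X^\ast_u$ for some $u$ in the generic, and any $\nu\in\bigcap N'_i\cap\o_1$, a common generic extension $\nu^\ast$ exists. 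By a density argument below $q'$ (using that $q'$ is $(N^\ast,\mtcl P_\s)$-generic and the relevant $u$ can be taken to extend $q'$), passing to an extension of $q'$ I may assume the club $D$ is decided and all the finitely many relevant $N'_i$ lie in $D$ and in a common $\mtcl X^\ast_u$; then $\nu^\ast$ exists in $V[G_\s]$, and by genericity of $q'$ over $N^\ast$ its value can be computed from a name in $N^\ast$, so I can pick an actual ordinal $\nu^\ast\in\o_1$ and a single extension $q''\le_\s q'$ forcing $\nu^\ast$ to be the common $\dot{\mtcl R}$-generic. Setting $q'''=(p''^\smallfrown\la\nu^\ast\ra,\D_{q''}\cup\{(N,\s+1)\})$ (having closed the side-condition system off symmetrically as in the base case) gives the desired pre-generic extension. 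The case where $\Phi(\s)$ is irrelevant is trivial: take $\nu^\ast=0$.

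For $(2)_{\s+1}$: let $q$ be $(N,\mtcl P_{\s+1})$-pre-generic, so $(N,\s+1)\in\D_q$, and let $D'\in N^\ast$ be dense in $\mtcl P_{\s+1}$; I must find a common extension of $q$ and some element of $D'\cap N^\ast$ — equivalently show $q$ does not force $\dot G_{\s+1}\cap D'\cap N^\ast=\emptyset$. Work below $q$; using $(2)_\s$, $q|_\s$ is $(N^\ast,\mtcl P_\s)$-generic, so the ``$\s$-part'' of genericity is handled, and the extra coordinate $\nu$ is governed by clause $(b\,4)$ which by construction makes $\nu$ generic over $N[G_\s]$ for $\dot{\mtcl R}$ — this is precisely what's needed to push density through the single new coordinate. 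Concretely, given any $s\le_{\s+1}q$ one reflects $s|_\s$ into $N$ via the symmetric-system isomorphisms, uses $(N^\ast,\mtcl P_\s)$-genericity of $q|_\s$ to find a matching element of $D'\cap N^\ast$ compatible with $q|_\s$, and then lifts the amalgamation to stage $\s+1$ using clause $(c\,2)$ and the genericity of $\nu$.

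For a nonzero limit $\g\le\k$, $(2)_\g$ follows from $(2)_\e$ for $\e<\g$ together with the finite-support condition $(d\,4)$: any dense set $D'\in N^\ast$ is, below any condition $q$, reflected by genericity at some stage $\e<\g$ in $N^\ast$ below which the support of a given extension lies — standard ``direct limit at a point'' reasoning, using that $\mtcl X_q$ is finite so all markers $\b_i$ are bounded appropriately. For $(1)_\g$: given $q\in N$, note $\g\cap N$ is unbounded in $\g$ when $cf(\g)=\o$ (and we only ever need the case relevant to $N$, i.e. $sup(N\cap\k)$ handling when $\g=\k$); by $(1)_\e$ for suitable $\e\in N$ below $sup(supp(q)\cup\text{markers})$ get a pre-generic condition at that stage and then pad with $0$'s and add the pair $(N,\g\cap sup(N\cap\k))$, checking clauses $(d\,1)$--$(d\,4)$ and that the new pair, having marker $\le\e$ effectively (via $(d\,3)$), imposes no bad promises — again using that for $\x\in[\,\e,\g)\cap N$ on which a future condition is nonzero, the marker of $(N,\cdot)$ being below $\e$ deactivates clause $(b\,4)$ for $N$ at $\x$. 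The main obstacle, by far, is the successor step of $(1)$ — specifically, extracting from ``$V$-finite properness of $\dot{\mtcl R}$'' an \emph{actual} ordinal $\nu^\ast\in\o_1$ that works simultaneously against all the finitely many structures in $\mtcl X^\ast_q\cap\mtcl M_{\s+1}$; this requires carefully marrying the genericity of $q|_\s$ over $N^\ast$ (to name $\nu^\ast$) with the club $D$ in the definition and the ``$\{N_i\}\sub\mtcl X^\ast_u$ for some $u\in G_\s$'' clause, and it is where the whole point of the ``symmetric systems as side conditions'' design is exercised.
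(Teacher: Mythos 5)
Your overall architecture matches the paper's: prove $(1)_\a$ and $(2)_\a$ together by induction on $\a$, with $(2)_\s$ feeding into $(1)_{\s+1}$ to guarantee $N^\ast[\dot G_\s]\cap V=N^\ast$. The base case is also fine (though the ``closing off under $(B)$--$(D)$'' is unnecessary: since $q\in N$, all its structures lie in $N$, so $q\cup\{(N,0)\}$ is automatically a symmetric system, and the paper uses exactly this). However, there are two genuine problems in the inductive steps.

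First, your treatment of $(1)_{\s+1}$ when $\n\neq 0$ has a real gap. You propose to apply $V$-finite properness to all of $\mtcl X^\ast_{q'}\cap\mtcl M_{\s+1}$ simultaneously, but the definition of $V$-finite properness requires the starting condition $\n$ to lie in $\bigcap_i N_i\cap\o_1$. There is no reason for $\n$ to lie in the old structures $N'\in\mtcl X^\ast_q$: on the contrary, clause $(b\,4)$ already forces $\n$ to be generic over $N'[\dot G_\s]$, which typically puts $\n$ outside $N'$. So the hypothesis of $V$-finite properness fails. The paper sidesteps this entirely by noting that $(N'[\dot G_\s],\dot{\mtcl R})$-genericity is preserved under $\dot{\mtcl R}$-extension; thus one only needs to produce $\n^\ast\leq\n$ generic over the single new structure $N$ (for which $\n\in N$, since $q\in N$), and mere ``$V$-properness'' for one structure suffices. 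The finite version is used instead in Remark \ref{quecosa} (starting from $\n=0$, so the membership requirement is trivially met) and, crucially, in the limit case below.

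Second, you dismiss $(2)_\g$ for $cf(\g)\geq\o_1$ as ``standard direct limit at a point reasoning,'' but this is in fact the technically heaviest part of the proof and the place where $V$-finite properness is genuinely exercised. The choice of $\s$ is delicate: it must lie in $N\cap\g$ above $supp(q)\cap N$ (not above all of $supp(q)$, which may escape $N$) and above $\sup(N'\cap N\cap\g)$ for all $N'\in\mtcl X_q$ with $\d_{N'}<\d_N$; the latter requires the observation that such sups are controlled via the isomorphisms $\Psi_{\ov N,N}$. After reflecting to get $q^\circ\in N^\ast$, one must then build a common extension $q^\dag$ of $q$ and $q^\circ$ by a finite recursion along $supp(q)\cup supp(q^\circ)$; at each coordinate $\eta\in supp(q^\circ)$ with $\s\leq\eta<\g$, the value $\p=p^\circ(\eta)$ lies in the intersection of all relevant $N'$ (precisely because $q^\circ\in N^\ast$ and by the symmetry clauses), so $V$-finite properness applies and produces a simultaneous generic extension $\p^\ast$. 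None of this is captured by a direct-limit argument, and your sketch would leave the proof unfinished at exactly the step the whole side-condition machinery was designed for.
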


\begin{proof}
The proof will be by induction on $\a$. We start with the case $\a =0$. For simplicity we are going to identify a $\mtcl P_0$--condition $q$ with $dom(\D_q)$. The proof of $(1)_0$ is trivial: It suffices to set $q'= q\cup\{N\}$.

The proof of $(2)_0$ is also easy: Let $E$ be a dense subset of $\mtcl P_0$ in $N^\ast$. It suffices to show that there is some condition in $E\cap N^\ast$ compatible with $q$. Notice that $q\cap N^\ast\in\mtcl P_0$ by Lemma \ref{iso2} (ii). Hence, we may find a condition $q^\circ\in E\cap N^\ast$ extending $q\cap N^\ast$.
Now let $$q^\ast=q\cup\{\Psi_{N, \ov N}(M) \,:\, M \in q^\circ,\,\ov N\in dom(\D_q),\,\d_{\ov N}=\d_N\}$$ By Lemma \ref{iso2} (iii) we have that $q^\ast$ is a condition in $\mtcl P_0$ extending both $q$ and $q^\circ$.

Let us proceed to the more substantial case $\a=\s+1$. We start by proving $(1)_\a$. Assume first  that $\sigma \in dom(F_q)$ and let $\n = F_q(\sigma)$. By Lemma \ref{solid} we may assume that $\n$ is the canonical name for an ordinal in $\o_1$. By $(1)_\s$ we may also assume, by extending $q\av_\s$, that $q\av_\s$ is $\mtcl P_\s$--pre-generic for $N$. Let $\dot D$ be the $\lhd$--first $\mtcl P_\s$--name for a club $D$ of $([H(\k)]^{\al_0})^V$ in $V$ such that $D$ witnesses the $V$--finite properness of $\dot{\mtcl Q}_\s$  and note that $q\av_\s$ forces $N \in \dot D$ since it forces $N^\ast[\dot G_\sigma]\cap V = N^\ast$ (which follows from $(2)_\sigma$ and from the fact that $q\av_\s$ is $\mtcl P_\s$--pre-generic for $N$). By the definition of $V$--finite properness and the fact that $(N, \s)\in \D_{q\av_\s}$, there is then some $z=(F_z, \Delta_z) \in \mtcl P_\s$ extending $q\av_\s$ and an ordinal $\nu^{\ast}$ such that $z$ forces that $\nu^\ast$ is an $(N[\dot G_\s], \dot{\mtcl Q}_\s)$--generic condition extending $\nu$. It suffices to define $q'$ as the condition $(F_z\cup\{\la\s,  \nu^{\ast} \ra\}, \D_q \cup \D_{z} \cup \{(N,\a)\})$ (Lemma \ref{amalg1} ensures that $q'$ is a condition extending $q$).

The proof in the case that $q=(F, \D)$ with $dom(F)\sub\s$ can be reduced to the previous case by the following Claim. 

\begin{claim}\label{quecosa}
If $q=(F, \D)$ and $\s\notin dom(F)$, then we can find a condition $q'=(F', \D')$ extending $q$ and such that $\s\in dom(F')$.
\end{claim}

\begin{proof}
This is true, using Lemma \ref{amalg1.5}, by essentially the same argument as above since $(2)_\sigma$ guarantees that $q\av_\s$ is also $(M^\ast,\, \mtcl P_\s)$--generic for all $M^\ast \in \mtcl M^\ast_{\s+1}$ such that $(M, \s+1)\in\D_q$ for $M=M^\ast\cap H(\k)$, which implies that a condition forcing  that all these $M$ are in $\dot D$ can be found as in that argument. \end{proof}

Now let us prove $(2)_\a$. 
Let $E$ be an open dense set of $\mtcl P_\a$ in $N^\ast$. We should find a condition $\tilde{q} \in E \cap N^\ast$ compatible with $q$. Since $E$ is dense and open, we may start assuming that $q \in E$. By Claim \ref{quecosa} we may also assume that $\sigma \in dom(F_q)$. Let $\nu = F_q(\sigma)$.  Let $G_{\sigma}$ be a $\mtcl P_{\sigma}$-generic
filter over $V$ with $q\av_{\sigma} \in G_{\sigma}$. By $(2)_\s$ we have that $G_{\sigma}$ is also generic over
$N^\ast$. Define $E/G_{\sigma}$ as the set of those conditions in $E$ whose restriction to $\s$ belongs to $G_{\sigma}$, and $\tilde{E}$ as the set of those $\eta < \o_1$ such that either

\begin{itemize}

\it[(i)] there exists some $t\in E/G_{\sigma}$ such that $\sigma\in dom(F_t)$ and $\eta = F_t(\sigma)$, or else

\it[(ii)] there is no $\eta'$ in $(\dot{\mtcl Q}_\s)_{G_\s}$ extending $\eta$ for which there is
any $t\in E/G_{\sigma}$ such that $\sigma\in dom(F_t)$ and $\eta'=F_{t}(\sigma)$.

\end{itemize}

Note that $\tilde{E}$ is a dense subset of $(\dot{\mtcl Q}_\s)_{G_{\sigma}}$ and that $\tilde{E} \in N^{\ast}[G_{\s}]$.  In fact, $\tilde{E}$ is in $N[G_\s]$ by the $\al_2$--c.c.\ of $\mtcl P_\s$ and the fact that $\dot{\mtcl R}_\s$ is a partial order on $\o_1^V$. Hence, by condition $(b\,4)$ in the definition of $\mtcl P_{\a}$ together with the choice of $\dot{\mtcl R}_\sigma$ we know that there is some $\eta \in \tilde{E}\cap N[G_{\sigma}]$ such that $\nu$ and $\eta$ are $(\dot{\mtcl Q}_\s)_{G_{\sigma}}$--compatible.

\begin{claim}\label{cl453}
Condition (i) above holds for $\eta$.
\end{claim}

\begin{proof}
Let $r$ be a condition in $G_\s$ extending $q\av_{\s}$ and let $\eta'$ be such that $r$ forces that $\eta'$ is a condition in $\dot{\mtcl Q}_\s$ extending both $\eta$ and $\nu$. But then $q^{\ast}: =(F_r\cup\{\la \s, \eta' \ra\ra\}, \D_r\cup \D_q)$ is a $\mtcl P_\a$ condition extending $q$ by Lemma \ref{amalg1}, $q^{\ast} \in E/G_{\sigma}$, and $q^{\ast}\av_{\sigma}$ forces that condition (i) holds for $\eta$ since $\eta'$ witnesses the failure of (ii) for $\eta$. This shows that $q\av_{\s}$ forces that condition (i) holds for $\eta$.
\end{proof}

By the above claim and by $N^{\ast}[G_{\sigma}]\prec H(\theta_\sigma)^V[G_\sigma]$, there is a condition $\tilde q$ in $E/G_{\sigma} \cap N^{\ast}[G_\sigma]$ such that $\sigma\in dom(F_{\tilde q})$ and $F_{\tilde q}(\sigma)=\eta$, and of course $\tilde q \in N$ since $N^\ast[G_\sigma]\cap V = N^\ast$ by $(2)_\sigma$. It remains to see that $\tilde{q}$ is compatible with $q$. For this, notice that there is some $w \in G_\sigma$ extending $q\av_\sigma$ and $\tilde q \av_\sigma$ and there is some $\eta^\ast < \omega_1$ such that $w$ forces that $\eta^\ast$ extends $\eta$ and $\nu$ in $(\dot{\mtcl Q}_\s)_{G_{\sigma}}$. But then $w$ forces that $\eta^\ast$ is $(M^\ast[\dot G_\sigma],\, (\dot{\mtcl Q}_\s)_{G_\sigma})$--generic whenever $M^\ast \in \mtcl M^\ast_\alpha$ is such that $(M^\ast \cap H(\kappa), \a)\in\Delta_{\tilde q} \cup \Delta_q$ since $\eta^\ast$ extends $\eta$ and $\nu$. It follows that $(F_w\cup\{\la \s, \eta^\ast \ra\}, \Delta_q \cup \Delta_{\tilde q} \cup \Delta_w)$ is a common extension of $q$ and $\tilde q$ by Lemma \ref{amalg1}.

It remains to prove the lemma for the case when $\a$ is a nonzero limit ordinal. We start out proving $(1)_\a$:
Let $\s\in\a\cap N$ be a bound for $supp(q)$. By $(1)_\s$ there is a condition $t\leq_\s q\av_\s$ which is pre-generic for $N$. Now let $q'=(F_t, \D_t\cup \D_q\cup\{(M, \a\cap sup(N\cap\k))\})$. It suffices to prove by induction on $\x\in[\s,\,\a]$ that $q'\av_\x$ is a condition in $\mtcl P_\x$. The limit case of the induction follows immediately from the induction hypothesis, and the successor case follows trivially from the fact that $dom(F_t)\sub \s$, and so condition $(b\,4)$ in the definition of $\mtcl P_{\x+1}$ does not apply at that stage for $q'\av_{\x+1}$.

For $(2)_\a$, let $E\sub\mtcl P_\a$ be dense and open, $E\in N^\ast$, and let $q$ satisfy the hypothesis of $(2)_\a$. We want to find a condition in $E\cap N^\ast$ compatible with $q$. We may assume that $q\in E$.

Suppose first that $cf(\a)=\o$. In this case we may take $\s\in N^\ast\cap\a$ above $supp(q)$. Let $G_\s$ be $\mtcl P_\s$--generic with $q\av_\s\in G_\s$. In $N^\ast[G_\s]$ it is true that there is a condition $q^\circ\in\mtcl P_\a$ such that

\begin{itemize}

\it[(a)] $q^\circ\in E$ and $q^\circ\av_\s\in G_\s$, and

\it[(b)] $supp(q^\circ)\sub \s$.

\end{itemize}

\noindent (the existence of such a $q^\circ$ is witnessed in $V[G_\s]$ by $q$.)

Since $q\av_\s$ is $(N^\ast, \mtcl P_\s)$--generic by induction hypothesis, $q^\circ\in N^\ast$. By extending $q$ below $\s$ if necessary, we may assume that $q\av_\s$ decides $q^\circ$ and extends $q^\circ\av_\s$. But now, the natural amalgamation $(F_q, \D_q\cup \D_{q^\circ})$ of $q$ and $q^\circ$ is a $\mtcl P_\a$--condition extending them by Lemma \ref{amalg2}.

Finally, suppose $cf(\a)\geq\o_1$. We may assume that $supp(q)$ is not bounded by $sup(N\cap\a)$ as otherwise we can argue as in the $cf(\a)=\o$ case. Thanks to Lemma \ref{solid}, by extending $q$ if necessary we may also assume that, for every $\x\in supp(q)$, $F_q(\x)$ is the canonical $\mtcl P_\x$--name for some ordinal in $\o_1$. 

Notice that if $N'\in dom(\D_q)$ and $\d_{N'}<\d_N$, then $sup(N'\cap N\cap \a)\leq sup(\Psi_{\ov N, N}(N')\cap\a)\in N\cap\a$ whenever $\ov N\in dom(\D_q)$ is such that $\d_{\ov N}=\d_N$ and $N'\in \ov N$. To see this, recall that $\Psi_{\ov N, N}$ fixes $\ov N \cap N \cap \kappa$. Also, $sup(\Psi_{\ov N, N}(N')\cap\a)\in N\cap\a$ since in $N$ it holds that $\Psi_{\ov N, N}(N')$ is countable and that $\a$ has uncountable cofinality. This is the only place in the proof where the symmetry of the systems $dom(\D_q)$ is needed. The symmetry of the systems $dom(\D_q)$ is needed precisely to derive the conclusion that $sup(N'\cap N\cap\a)<sup(N\cap\a)$ for every $N'\in dom(\D_q)$ with $\d_{N'}<\d_N$. 

Hence we may fix $\s\in N\cap \a$ such that:
\begin{itemize}

\it[(i)] $sup(N'\cap N\cap \a) < \sigma$ for all $N'\in dom(\D_q)$ with $\d_{N'}<\d_N$, and
\it[(ii)]  if $\eta \in supp(q)$ and $\eta < sup(\a \cap N)$, then $\eta < \sigma$.

\end{itemize}

As in the case $cf(\a)=\o$, if $G_\s$ is $\mtcl P_\s$--generic with $q\av_\s\in G_\s$, then in $N^\ast[G_\s]$ we can find a condition $q^\circ \in\mtcl P_\a$ such that $q^\circ\in E$, $q^\circ\av_\s\in G_\s$, $supp(q^\circ)\setminus \sigma\neq \emptyset$ and such that, for each $\x\in supp(q^\circ)$, $F_{q^\circ}(\x)$ is the canonical $\mtcl P_\x$--name for an ordinal in $\o_1$  (again, the existence of such a condition is witnessed in $V[G_\s]$ by $q$), and such a $q^\circ$ will necessarily be in $N^\ast$. By extending $q$ below $\s$ we may assume that $q\av_\s$ decides $q^\circ$ and extends $q^\circ\av_\s$. The proof of $(2)_\a$ in this case will be finished if we can show that there is a condition $q^\dag$ extending $q$ and $q^\circ$. The condition $q^\dag$ can be built by recursion on $supp(q^\circ)\setminus \sigma$ (note that by the choice of $\sigma$, $min(supp(q)\setminus \sigma)\geq sup(N\cap\alpha)$, and therefore $min(supp(q)\setminus \sigma)> max(supp(q^\circ))$). This finite construction mimics the proof of $(1)_\b$ for successor $\b$, but also uses the assumption of $V$--finite properness. The details are as follows.

Let $(\x_i)_{i<r}$ be the strictly increasing enumeration of $supp(q^\circ)\setminus \s$. Note that $r>0$, so $r-1\geq 0$. We build a sequence $(q_i)_{i<r}$ of conditions as follows:

For $i=0$, we first extend $q\av_\s$ to a $\mtcl P_{\x_0}$--condition $\ov q$ extending $q\av_{\x_0}$ and $q^\circ\av_{\x_0}$. $\ov q$ can be found by appealing to Lemma \ref{amalg2} if $\s < \x_0$, and if $\s=\x_0$ it is enough of course to set $q_0=q\av_\s$. Now note that $F_{q^{\circ}}(\x_0)=\check{\pi}$ is the canonical name for an ordinal $\pi$ in the intersection of all $\overline{N}$ with $\overline{N}\in dom(\D_q)$, $\x_0 \in \overline{N}$ and $\d_{\overline{N}}\geq\d_{N^\ast}$ (on the other hand, (i) implies that there is no $N'\in dom(\D_q)$ with $\d_{N'}<\d_N$ such that $\x_0 \in N'$). Hence, since $\dot{\mtcl Q}_{\x_0}$ is a $\mtcl P_{\x_0}$--name for a $V$--finitely proper poset on $\omega_1$, there is an ordinal $\pi^{\ast}$ and an extension $r$ of $\ov q$ forcing that $\pi^{\ast}$ extends $\pi$ in $\dot{\mtcl Q}_{\x_0}$ and is $(\overline{N}[\dot G_{\x_0}],\,\dot{\mtcl Q}_{\x_0})$--generic for all relevant $\overline{N}$ (i.e., such that there some $\beta$ such that $(\overline{N}, \beta)\in \Delta_q$, $\beta\geq \x_0+1$ and $\overline{N}\in \mtcl M_{\x_0+1}$). The reason is that $\ov q$ forces that there is a club $\dot D$ witnessing the $V$--finite properness of $\dot{\mtcl Q}_{\x_0}$, and such that every relevant $\overline{N}$  is in $\dot D$. This $\dot D$ can be taken to be the first club, in the well--order of $H(\k^+)[\dot G_{\x_0}]$ induced by $\lhd$, witnessing  the $V$--finite properness of $\dot{\mtcl Q}_{\x_0}$. (For such a relevant $\overline{N}$ there is some $\overline{N}^\ast \in \mtcl M^\ast_{\x_0+1}$ containing $\lhd$ and such that $\overline{N}=\overline{N}^\ast \cap H(\kappa)$ which, since $\lhd\in \ov{N}^\ast$, implies that $\overline{N}^\ast$ contains a name for $\dot D$. Applying this fact and $(2)_{\x_0}$ we conclude that $\overline{q}$ forces $\overline{N} \in \dot D$.) It follows now from Lemma \ref{amalg1} that there is a $\mtcl P_{\x_0+1}$--condition $q_0$ extending $r$, $q\av_{\x_0+1}$ and $q^\circ\av_{\x_0+1}$.  

For $i$ such that $i+1<r$, we assume inductively that $q_i\in\mtcl P_{\x_i+1}$ extends $q\av_{\x_i+1}$ and $q^\circ\av_{\x_i+1}$, and obtain $q_{i+1}\in\mtcl P_{\x_{i+1}+1}$ from $q_i$ by arguing exactly as in the case $i=0$ with $\x_{i+1}$ instead of $\x_0$ and starting with $q_i$ rather than $q\av_\s$. In the end we obtain $q_{i+1}\in\mtcl P_{\x_{i+1}+1}$ extending both $q\av_{\x_{i+1}+1}$ and $q^\circ\av_{\x_{i+1}+1}$. 

Let $\mu = \xi_{r-1}=  max(supp(q^\circ))$ and let $$q^{\dag} = (F_{q_{r-1}}\cup (F_q \restr [\mu+1,\, \a)), \D_{q_{r-1}} \cup \D_{q^\circ} \cup \D_{q})$$

\begin{claim}
$q^{\dag}$ is a condition in $\mtcl P_{\a}$ extending both $q$ and $q^{\circ}$.
\end{claim}

\begin{proof}
We prove by induction that if $\mu+1 \leq \xi \leq \alpha$, then $q^{\dag}\av_{\xi}$ is in $\mtcl P_{\xi}$ and $q^\dag\av_{\xi} \leq_{\xi} q^\circ|_{\xi}$,  $q\av_{\xi}$. Note that the case $\xi= \mu+1$ follows from the fact that $q_{r-1} \leq_{\mu+1} q^\circ\av_{\mu+1}$,  $q\av_{\mu+1}$ and $q^{\dag}\av_{\mu+1}= q_{r-1}$. Assume now that $\xi$ is the successor of an ordinal $\eta \geq \mu+1$. We show that $q^{\dag}\av_{\eta+1}$ satisfies clause  $(b\,4)$ in the definition of $\mtcl P_{\eta+1}$ (clearly it satisfies the other clauses).  In other words, we must show that if $\eta\in dom(F_q)$, then $q^{\dag}\av_\eta$ forces that $F_q(\eta)$ is $(M[\dot{G}_\eta], \dot{\mtcl Q}_\eta)$--generic for all those $M \in \mtcl M_{\eta+1}$ for which there exists an ordinal $\beta \geq \eta+1$ such that $(M, \beta) \in \D_{q_{r-1}} \cup \D_{q^\circ} \cup \D_{q}$. But such a pair $(M, \beta)$ cannot be in $\D_{q_{r-1}}$, since all markers occurring in side conditions in $q_{r-1} \in \mtcl P_{\mu+1}$ are at most $\mu+1< \eta+1$. On the other hand, (ii) implies that $\eta \in supp(q) \setminus \sigma =supp(q) \setminus (N\cap \alpha)$. So, there is no $M \in dom(\D_{q^\circ})$ such that $M \in \mtcl M_{\eta+1}$ (such a countable $M$ is in $N$, and therefore $M\cap \alpha\sub N\cap \alpha$), and hence $(M, \beta)$ is neither in $\D_{q^\circ}$. We conclude that such a pair $(M, \beta)$ is in $\D_{q}$. By $(b\,4)$ applied to $q\av_{\eta+1}$, we have that $q\av_{\eta}$ (and hence, $q^{\dag}\av_\eta$) forces what we want. Finally note that the inductive hypothesis
$q^\dag\av_{\eta} \leq_{\eta} q^\circ\av_{\eta}$, $q\av_{\eta}$, the definition of $q^\dag$, and the fact that the maximum of the support of $q^{\circ}$ is equal to $\mu< \eta$ together imply that $q^\dag\av_{\eta+1} \leq_{\eta+1} q^\circ \av_{\eta+1}$, $q\av_{\eta+1}$. The case when $\xi$ is limit follows from the inductive hypothesis.

\end{proof}

The above claim finishes the proof of $(2)_\a$ in the present case and the proof of the lemma.
\end{proof}

\begin{corollary}\label{proper}
For every $\a\leq \k$, $\mtcl P_\a$ is proper.
\end{corollary}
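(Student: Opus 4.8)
The plan is to deduce the Corollary immediately from Lemma~\ref{horribilis}, exactly as the sentence preceding that lemma announces. What makes this possible is the standard reformulation of properness: a poset $P$ is proper as soon as there is a \emph{single} regular cardinal $\l$ with $P\in H(\l)$ together with a club $\mtcl C\sub[H(\l)]^{\al_0}$ such that for every $N^\ast\in\mtcl C$ with $P\in N^\ast$ and every $p\in P\cap N^\ast$ there is $p'\leq p$ which is $(N^\ast, P)$--generic. (That this yields the a priori stronger assertion in which one quantifies over all sufficiently large regular $\l$ and all countable $N^\ast\prec H(\l)$ with $P\in N^\ast$ is the usual absoluteness bookkeeping: any dense $D\sub P$ belonging to such an $N^\ast$ already belongs to $N^\ast\cap H(\l_0)$, with $\l_0$ the specific cardinal provided, and $N^\ast\cap H(\l_0)\prec H(\l_0)$, so genericity over $N^\ast\cap H(\l_0)$ implies genericity over $N^\ast$.)

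Concretely, fixing $\a\leq\k$, I would take $\l=\t_\a$ and $\mtcl C=\mtcl M^\ast_\a$ and check the displayed condition for $P=\mtcl P_\a$. That $\t_\a$ is large enough is clear from the arithmetic hypotheses: $|\mtcl P_\a|=\k$ and $2^{<\k}=\k$, so $\mtcl P_\a$, its power set, and hence all of its dense subsets lie already in $H(\t_0)\sub H(\t_\a)$; in particular $\mtcl P_\a\in H(\t_\a)$. Moreover $\mtcl M^\ast_\a$ — the countable elementary substructures of $H(\t_\a)$ containing the two fixed parameters $\Phi$ and $\la\t_\b:\b<\a\ra$ — is a club of $[H(\t_\a)]^{\al_0}$; each member $N^\ast$ of it contains $\mtcl P_\a$ (the construction through stage $\a$ is recovered inside $N^\ast$ from $\Phi$ and $\la\t_\b:\b<\a\ra$, whose domain $\a$ also belongs to $N^\ast$) and satisfies $N:=N^\ast\cap H(\k)\prec H(\k)$, since $H(\k)=\mathrm{ran}(\Phi)\in N^\ast$; note also $N\in\mtcl M_\a$.

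Granting all this, the argument is one line. Given $N^\ast\in\mtcl M^\ast_\a$ and $q\in\mtcl P_\a\cap N^\ast$, I would note $q\in N$ (for $\a<\k$ because $\mtcl P_\a\sub H(\k)$, and for $\a=\k$ upon identifying each condition with its finitely supported data, which again lies in $H(\k)$), apply clause $(1)_\a$ of Lemma~\ref{horribilis} to obtain $q'\leq_\a q$ that is $(N,\mtcl P_\a)$--pre-generic, and then apply clause $(2)_\a$ of Lemma~\ref{horribilis} — whose hypothesis $\mtcl P_\a\in N^\ast$ has just been verified — to conclude that $q'$ is $(N^\ast,\mtcl P_\a)$--generic. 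Since $q$ and $N^\ast$ were arbitrary, $\mtcl P_\a$ is proper, for every $\a\leq\k$. I do not expect any genuine obstacle here: all of the substance sits in Lemma~\ref{horribilis}, and the only point demanding a little attention is the routine bookkeeping of the second paragraph — that $\mtcl M^\ast_\a$ is a club, that its members contain $\mtcl P_\a$ and cut down to elementary substructures of $H(\k)$, and that testing genericity against the single structure $H(\t_\a)$ already witnesses properness.
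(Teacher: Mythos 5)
Your argument is correct and follows exactly the route the paper intends: the Corollary is deduced from Lemma~\ref{horribilis} via the standard ``single $\l$'' test of properness, with $\l=\t_\a$, $\mtcl C=\mtcl M^\ast_\a$, and with $(1)_\a$ supplying pre-genericity and $(2)_\a$ upgrading it to genuine $(N^\ast,\mtcl P_\a)$--genericity. The paper itself gives no separate proof and simply declares the Corollary an immediate consequence of the Lemma, so your write-up is essentially the argument the authors had in mind.

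One small inaccuracy worth flagging: you assert that \emph{every} $N^\ast\in\mtcl M^\ast_\a$ contains $\mtcl P_\a$ because ``the construction through stage $\a$ is recovered inside $N^\ast$ from $\Phi$ and $\la\t_\b:\b<\a\ra$''. This is not clearly true when $\a$ is $0$ or a successor: the recursion defining $\mtcl P_\a$ at such stages uses $\mtcl M_\a$, which is built from the notion ``countable elementary substructure of $H(\t_\a)$'' --- second-order from $H(\t_\a)$'s point of view --- so $\mtcl M_\a$ (and hence $\mtcl P_\a$) is not obviously definable in $H(\t_\a)$ from $\Phi$ and $\la\t_\b:\b<\a\ra$. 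The paper is careful here: it asserts $\mtcl P_\a\in N^\ast$ only for $N^\ast\in\mtcl M^\ast_\b$ with $\a<\b$ and $\a\in N^\ast$, and clause $(2)_\a$ carries $\mtcl P_\a\in N^\ast$ as an explicit hypothesis rather than deriving it. Fortunately nothing is lost: your properness criterion already quantifies only over $N^\ast\in\mtcl C$ with $\mtcl P_\a\in N^\ast$, and since (as you correctly note) $\mtcl P_\a\in H(\t_0)\sub H(\t_\a)$, the set $\{N^\ast\in\mtcl M^\ast_\a:\mtcl P_\a\in N^\ast\}$ is still a club of $[H(\t_\a)]^{\al_0}$, so the application of $(1)_\a$ followed by $(2)_\a$ goes through exactly as you wrote it. So the proof is sound; just drop or soften the parenthetical claim that membership of $\mtcl P_\a$ in $N^\ast$ is automatic.
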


Given an ordinal $\a< \k$, we let $\dot G^{+}_{\a}$ be a $\mtcl P_{\a+1}$--name for the collection of all $\nu$ for which there exists a condition $q\in \dot G_{\a+1}$ with $\alpha \in dom(F_q)$ and $F_q(\alpha) = \nu$.

The following lemmas are easy.

\begin{lemma}\label{genn}
If $\a<\k$, then $\mtcl P_{\a+1}$ forces that $\dot G^{+}_\a$ generates a $V[\dot G_\a]$--generic filter over $\dot{\mtcl Q}_\a$.
\end{lemma}

\begin{proof}
It is easy to see that $\dot G^{+}_\a$ is forced to be a set of pairwise compatible $\dot{\mtcl Q}_\a$--conditions, so it suffices to show that $\mtcl P_{\a+1}$ forces $\dot G^{+}_\a\cap D\neq \emptyset$ for every dense subset $D$ of $\dot{\mtcl Q}_\a$ in $V[\dot G_\a]$. For this, note that if $\dot{D}$ is a $\mtcl P_\a$--name for a dense subset of $\dot{\mtcl Q}_\a$, then a consecutive application of Claim \ref{quecosa} and Lemma \ref{amalg1} shows that the set of $q \in \mtcl P_{\a+1}$ with $\alpha \in supp(q)$ and such that  $q\av_\alpha$ forces that $F_q(\alpha)$ is in $\dot{D}$ is a dense subset of  $\mtcl P_{\a+1}$.
\end{proof}

\begin{lemma}\label{kappa} 
$\mtcl P_{\k}$ forces $2^{\al_0}=\k$.
\end{lemma}

\begin{proof}
The inequality $2^{\al_0}\geq \k$ follows for example from the fact that there are $\k$--many ordinals $\alpha < \kappa$ such that $\Phi(\a)$ is Cohen forcing, since Cohen forcing has the c.c.c. 

The inequality $2^{\al_0}\leq \k$ follows from the fact that, by Lemma \ref{cc} together with $\k^{\al_1}=\k$, there are exactly $\k$--many nice $\mtcl P_{\k}$--names for subsets of $\o$ (see for example \cite{KUNEN} for a discussion of nice names and arguments involving counting of nice names).
\end{proof}

We are ready to prove our main theorem.

\textbf{Proof of Theorem \ref{mainthm}.} As we said, our forcing will be $\mtcl P_{\k}$.
By Lemma \ref{cc}, Corollary \ref{proper} and Lemma \ref{kappa}, it suffices to show that $\mtcl P_{\k}$ forces $\textsc{PFA}^{\mbox{fin}}(\o_1)$. But this follows easily from the following claim together with Lemma \ref{genn}.

\begin{claim}
If $\dot{\mtcl Q}$ is a $\mtcl P_{\k}$--name for a nontrivial finitely proper poset defined on $\o_1$ and $(\dot{D}_i)_{i<\o_1}$ is a sequence of $\mtcl P_{\k}$--names for dense subsets of $\dot{\mtcl Q}$, then there is a high enough $\a<\k$ such that $\dot{\mtcl Q}$ and all members of $(\dot{D}_i)_{i<\o_1}$ are $\mtcl P_\a$--names and $\Phi (\alpha)= \dot{\mtcl Q}$ is a $\mtcl P_\a$--name for a $V$--finitely proper forcing with respect to $\dot G_\a$.
\end{claim}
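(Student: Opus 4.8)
The plan is to produce $\a$ in three moves --- normalize the names, push all of them below some $\a_0<\k$ by a chain-condition count, and appeal to the bookkeeping function $\Phi$ --- and then to verify that the level so obtained is as required. First I would normalize. Since $\mtcl P_\k$ is proper (Corollary \ref{proper}) it preserves $\o_1$, so I may fix a $\mtcl P_\k$--name $\dot f$ for a bijection of $\o_1$ onto the underlying set of $\dot{\mtcl R}$ and replace $\dot{\mtcl R}$ by the isomorphic copy it induces on $\o_1$, rearranged so that $0$ is the weakest condition; finite properness is unaffected by this (the isomorphism is available as a parameter), and the $\dot D_i$ transport along $\dot f$. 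Passing further to nice names, $\dot{\mtcl R}$ --- coded by the $\leq\al_1$ antichains describing its ordering relation --- and the sequence $(\dot D_i)_{i<\o_1}$ become elements of $H(\k)$: by the $\al_2$--Knaster property (Lemma \ref{cc}) each antichain involved has size $\leq\al_1$, and every condition occurring in them has a sequence of countable ordinals of length $<\k$ as its working part, hence hereditarily small transitive closure.

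From $\k^{\al_1}=\k$ we have $cf(\k)\geq\al_2$, so the union of the $\leq\al_1$--many antichains coding $\dot{\mtcl R}$ and the $\dot D_i$ is a subset of $\mtcl P_\k=\bigcup_{\b<\k}\mtcl P_\b$ of size $\leq\al_1$, and --- after bounding supports and markers --- is contained in some $\mtcl P_{\a_0}$ with $\a_0<\k$; thus $\dot{\mtcl R}$ and $(\dot D_i)_{i<\o_1}$ become $\mtcl P_{\a_0}$--names. Since $\dot{\mtcl R}\in H(\k)$ and $\Phi^{-1}(\{\dot{\mtcl R}\})$ is unbounded in $\k$, I would then choose $\a\in\Phi^{-1}(\{\dot{\mtcl R}\})$ with $\a\geq\a_0$. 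Then $\Phi(\a)=\dot{\mtcl R}$ and, since $\mtcl P_{\a_0}$ is a complete suborder of $\mtcl P_\a$ (Lemma \ref{compll}), $\dot{\mtcl R}$ and $(\dot D_i)_{i<\o_1}$ appear in $V^{\mtcl P_\a}$.

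It remains to check that $\dot{\mtcl R}$ is a $\mtcl P_\a$--name for a $V$--finitely proper forcing with respect to $V^{\mtcl P_\a}$. Work in $V[G_\a]$, put $\mtcl R=\dot{\mtcl R}^{G_\a}$, and take as the club required by Definition \ref{fin_proper} the set $D=\{N\in\mtcl M_\k\,:\,\a\in N\}\sub([H(\k)]^{\al_0})^V$; for $N=N^\ast\cap H(\k)$ with $N^\ast\in\mtcl M^\ast_\k$ and $\a\in N^\ast$ one has $\mtcl P_\a\in N^\ast$ and $\mtcl P_\k\in N^\ast$, and, since $\mtcl M_\k\sub\mtcl M_{\a+1}\sub\mtcl M_0$, such $N$ are admissible side conditions. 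Suppose $\{N_i\,:\,i<m\}\sub D$, $\{N_i\,:\,i<m\}\sub\mtcl X^\ast_u$ for some $u\in G_\a$, and $\mtcl R\in N_i[G_\a]$ for all $i$, and let $\n\in\bigcap_{i<m}(N_i\cap\o_1)$. Since $(N_i,\a)\in\D_u$, $u$ is $(N_i,\mtcl P_\a)$--pre-generic, so $(2)_\a$ of Lemma \ref{horribilis} gives that $u$ is $(N^\ast_i,\mtcl P_\a)$--generic; hence $N^\ast_i[G_\a]\prec H(\t_\k)^{V[G_\a]}$, $N^\ast_i[G_\a]\cap V=N^\ast_i$, so $N_i[G_\a]\cap\o_1=N^\ast_i[G_\a]\cap\o_1=\d_{N_i}$, and $\mtcl R\in N_i[G_\a]\sub N^\ast_i[G_\a]$. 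Now extend $G_\a$ to a $\mtcl P_\k$--generic filter $G$. A density argument extending the proof of $(1)$ of Lemma \ref{horribilis} yields $w\in G$ with $(N_i,sup(N_i\cap\k))\in\D_w$ for all $i<m$; then $w$ is $(N_i,\mtcl P_\k)$--pre-generic, so by $(2)_\k$ of that lemma $w$ is $(N^\ast_i,\mtcl P_\k)$--generic, whence $N^\ast_i[G]\prec H(\t_\k)^{V[G]}$ and $N^\ast_i[G]\cap\o_1=\d_{N_i}$. As $\dot{\mtcl R}$ is a $\mtcl P_\a$--name, $\dot{\mtcl R}^G=\mtcl R$ and $V[G]\models$``$\mtcl R$ is finitely proper''; applying this with $\l=\t_\k$ to the finite family $\{N^\ast_i[G]\,:\,i<m\}$ of countable elementary submodels of $H(\t_\k)^{V[G]}$ (each containing $\mtcl R$) and to $\n\in\bigcap_{i<m}(N^\ast_i[G]\cap\mtcl R)$ (using that the domain of $\mtcl R$ is $\o_1$) produces $\n^\ast$ which $\mtcl R$--extends $\n$ and is $(N^\ast_i[G],\mtcl R)$--generic for all $i$; since $N_i[G_\a]\sub N^\ast_i[G]$ and $N_i[G_\a]\cap\mtcl R=\d_{N_i}=N^\ast_i[G]\cap\mtcl R$, this $\n^\ast$ is also $(N_i[G_\a],\mtcl R)$--generic. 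Finally, the assertion ``there is $\n^\ast$ which $\mtcl R$--extends $\n$ and is $(N_i[G_\a],\mtcl R)$--generic for every $i<m$'' is first order in the fixed forcing $\mtcl R\sub\o_1$ and the fixed countable sets $N_i[G_\a]\in V[G_\a]$, so, $\o_1$ being preserved, it reflects from $V[G]$ back down to $V[G_\a]$ --- which is what Definition \ref{fin_proper} demands.

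The hard part is this last verification, and inside it two things: that the side-condition structures $N_i\in\mtcl X^\ast_u$ with $u\in G_\a$ reflect correctly into the full extension --- that $N^\ast_i[G]$ is an elementary submodel of $H(\t_\k)^{V[G]}$ whose $\o_1$--part is still $\d_{N_i}$, which leans on Lemma \ref{horribilis} and the symmetric-system apparatus of $\mtcl P_\k$ (via the density argument producing a common $w\in G$ promoting all the $N_i$ to full pre-genericity); and that finite properness of $\mtcl R$, which is only hypothesized about $V^{\mtcl P_\k}$, has to be brought down to the intermediate model $V^{\mtcl P_\a}$, which goes through the absoluteness --- for fixed $\mtcl R\sub\o_1$ and fixed countable parameters --- of the existential assertion above. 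Everything else (the normalization, the $\al_2$--chain-condition count locating $\a_0$, and the use of $\Phi$) is routine.
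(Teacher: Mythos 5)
Your overall architecture (normalize $\dot{\mtcl R}$ to live on $\o_1$ with $0$ weakest, locate $\a_0<\k$ by $\al_2$--cc and $cf(\k)\geq\al_2$, hit $\Phi^{-1}(\dot{\mtcl R})$, then verify $V$--finite properness by bumping markers, invoking Lemma~\ref{horribilis}, applying finite properness of $\mtcl R$ in the full extension, and bringing the witness $\n^\ast$ down to $V[G_\a]$) matches the paper's proof step for step, and your closing absoluteness remark is a clean unpacking of the paper's terser ``$q$ can be extended to a $\mtcl P_\a$--condition forcing\ldots''. But there is one genuine misstep in the middle.

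You ``extend $G_\a$ to a $\mtcl P_\k$--generic filter $G$'' and then assert that ``a density argument extending the proof of $(1)$ of Lemma~\ref{horribilis} yields $w\in G$ with $(N_i,\sup(N_i\cap\k))\in\D_w$ for all $i<m$.'' This density claim is not available. Clause $(1)_\k$ is stated only for conditions lying \emph{inside} $N$, and that hypothesis is used essentially in its proof; it does not give a dense set of conditions with bumped markers below an arbitrary $s\leq u$. Indeed, if some $s\in G$ below $u$ has $\x\in supp(s)\cap N_i\cap[\b^s_i,\k)$ with $p_s(\x)\neq 0$ but with $s|_\x$ \emph{not} forcing $p_s(\x)$ to be $(N_i[\dot G_\x],\Phi(\x))$--generic, then the marker of $N_i$ in $s$ simply cannot be raised past $\x$ without altering the working part, and nothing in the setup forces that alteration to be possible below $s$. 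So for an arbitrary $G\supseteq G_\a$ the desired $w$ may fail to exist.

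The fix is the one the paper uses, and it eliminates the density argument entirely: writing $u=(p,\D_u)$ for the given condition with $\{N_i\,:\,i<m\}\sub\mtcl X^\ast_u$, let $q^\ast:=(p,\,\D_u\cup\{(N_i,\sup(N_i\cap\k))\,:\,i<m\})$ (with $p$ extended by zeroes on $[\a,\k)$). Since $p(\x)=0$ for all $\x\geq\a$, clause $(b\,4)$ is vacuous above $\a$, so $q^\ast$ is a $\mtcl P_\k$--condition and $q^\ast|_\a=u\in G_\a$. Now \emph{choose} $G$ to be $\mtcl P_\k$--generic with $q^\ast\in G$ and $G\cap\mtcl P_\a=G_\a$ (possible by Lemma~\ref{compll}), and take $w=q^\ast$. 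With that substitution the rest of your argument --- the genericity of $q^\ast$ over each $N^\ast_i$, the application of finite properness of $\mtcl R$ to $\{N^\ast_i[G]\}$ inside $V[G]$, the observation $N_i[G_\a]\cap\o_1=\d_{N_i}=N^\ast_i[G]\cap\o_1$ and $N_i[G_\a]\sub N^\ast_i[G]$, and the reflection of the $\Pi_1$ genericity statement about the countable ordinal $\n^\ast$ down to $V[G_\a]$ --- goes through and reproduces the paper's proof.
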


\begin{proof}
By the $\aleph_2$--chain condition of $\mtcl P_\kappa$, together with Corollary \ref{compll1} and with the fact that the relevant information about $\dot{\mtcl Q}$ and $(\dot{D}_i)_{i<\omega_1}$ is decided by a collection of $\aleph_1$--many maximal antichains of $\mtcl P_\kappa$, there is some $\alpha < \kappa$ such that $\dot{\mtcl Q}$ and all $\dot{D}_i$ are  $\mtcl P_\alpha$--names. Furthermore, by the choice of $\Phi$ we may assume $\Phi(\alpha)=\dot{\mtcl Q}$. Therefore we will be done if we show that $\mtcl P_\a$ forces that $\dot{\mtcl Q}$ is $V$--finitely proper with respect to $\dot G_\alpha$. The witnessing club for this can be taken to be any club $D$ consisting of structures of the form $N^\ast\cap H(\k)^V$ where $N^\ast\in\mtcl M^\ast_\kappa$ and $\alpha \in N^\ast$. 

Now let $q\in\mtcl P_\alpha$, let $\{N_i\,:\,i<m\}\sub D$ be a finite set such that $\{(N_i, \a)\,:\,i<m\}\sub \D_q$, and let $\n\in\bigcap_i N_i\cap\omega_1$. Then $$q^\ast:=(F_q, \Delta_q\cup\{(N_i, sup(N_i\cap\k))\,:\,i<m\})$$ is clearly a condition in $\mtcl P_\kappa$ extending $q$ (viewing $q$ as a $\mtcl P_\k$--condition in the natural way). Let $G$ be any generic filter for $\mtcl P_\k$ containing $q^\ast$. For each $i$, since $N_i=N^\ast\cap H(\kappa)^V$ for some $N^\ast \in \mtcl M^\ast_\kappa$ and $q^\ast$ is $\mtcl P_\kappa$--pre-generic for $N_i$, we have that $N_i[G]\cap V= N_i$ by Lemma \ref{horribilis}. By finite properness of $\mtcl Q := \dot{\mtcl Q}_G$ there is then some condition $\nu^\ast<\omega_1$ in $\mtcl Q$ extending $\nu$ and $(N_i[G],\, \mtcl Q)$--generic for all $i$. But since, for all $i < m$, $N_i[G] \cap \omega_1 = N_i \cap \omega_1 = N_i[G\cap\mtcl P_\a] \cap \omega_1$, it follows that $\nu^\ast$ is also $(N_i[G\cap\mtcl P_\alpha],\, \mtcl Q)$--generic for all such $i$. This finishes the proof since then, by Corollary \ref{compll1}, $q$ can be extended to a $\mtcl P_\a$--condition forcing that $\nu^\ast$ is $(N_i[G\cap\mtcl P_\alpha],\, \mtcl Q)$--generic for all $i < m$.
\end{proof}

\section{Applications: $\textsc{PFA}^{\mbox{fin}}_{\o_1}$ and the club filter on $\o_1$}\label{applications}

In this final section we show that $\textsc{PFA}^{\mbox{fin}}_{\o_1}$ implies both $\lnot\textsc{WCG}$ and $\lnot\mho$. It will be convenient to introduce the following natural notion of rank of an ordinal with respect to a set of ordinals.\footnote{This notion of rank will be particularly useful in the proof that $\textsc{PFA}^{\mbox{fin}}_{\o_1}$ implies $\lnot\mho$ (Proposition \ref{fa-vs-mho}).}

\begin{definition}
Given a set $X$ and an ordinal $\d$, we define the \textit{Cantor--Bendixson rank of $\d$ with respect to $X$},
$rank(X, \d)$, by specifying that
\begin{itemize}

\it $rank(X, \d) \geq 1$ if and only if $\d$ is a limit point of ordinals in $X$.

\it If $\m >1$, $rank(X, \d) \geq \m$ if and only and for every $\eta < \mu$, $\d$ is a limit of ordinals $\e$ with $rank(X, \e)\geq\eta$.

\end{itemize}

\end{definition}

A function $F:\o_1\into\o_1$ is normal if it is strictly increasing and continuous. The following two lemmas are easy consequences of our definition of rank.

\begin{lemma}\label{small} Let $A\sub X$ be sets of ordinals and let $\d$ be an ordinal. If $rank(A, \d) < rank(X, \d)$, then $rank(X\setminus A, \d) = rank(X, \d)$.\end{lemma}

\begin{lemma}\label{extend}
Given any strictly increasing finite function $f\sub\o_1\times\o_1$, if $rank(f(\x), f(\x))\geq\x$ for every $\x\in dom(f)$, then $f$ can be extended to a normal function $F:\o_1\into\o_1$.
\end{lemma}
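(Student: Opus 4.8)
The plan is to split the construction at the finitely many points of $dom(f)$. Enumerate $dom(f)$ as $\x_0<\dots<\x_{n-1}$ and set $\a_j=f(\x_j)$, so $\a_0<\dots<\a_{n-1}$ and $rank(\a_j,\a_j)\geq\x_j$ for each $j<n$. I would obtain $F$ as a union of normal ``pieces'': one on $[0,\x_0]$ taking value $\a_0$ at $\x_0$, one on each $[\x_j,\x_{j+1}]$ taking value $\a_j$ at $\x_j$ and $\a_{j+1}$ at $\x_{j+1}$, and one on the final tail $[\x_{n-1},\o_1)$ taking value $\a_{n-1}$ at $\x_{n-1}$. The tail is immediate: $\z\mapsto\a_{n-1}+\g$, with $\g$ the ordinal satisfying $\x_{n-1}+\g=\z$, is normal, and since $\a_{n-1}<\o_1$ its range is contained in $\o_1$. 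Since consecutive pieces meet in the single point $\x_j$, where they agree with value $\a_j$, the glued map $F$ is well defined and strictly increasing, its range is contained in $\o_1$ (finitely many pieces, each of countable length with countable values), and it is continuous: at interior limit ordinals this is clear, and at each $\x_j$ it holds because the piece ending at $\x_j$ is continuous there and the value of a normal function at a limit is the supremum over any tail. Thus $F$ is normal and extends $f$.

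The whole matter is thereby reduced to the following claim, which I would prove by induction on $\eta$, reading off what is needed directly from the definition of rank (alternatively one may invoke the standard equivalence ``$rank(\b,\b)\geq\eta$ iff $\o^\eta$ divides $\b$''): \emph{if $rank(\a',\a')\geq\eta$ and $\b<\a'$, then there is a normal function $g$ with domain $\eta+1$ such that $g(0)=\b$ and $g(\eta)=\a'$}. The case $\eta=1$ is trivial. If $\eta=\m+1$ is a successor, the endpoint $\eta$ of the domain imposes no continuity condition, so using the definition of rank one picks $\varepsilon$ with $\b<\varepsilon<\a'$ and $rank(\varepsilon,\varepsilon)\geq\m$, applies the induction hypothesis on $[0,\m]$, and sets $g(\m+1)=\a'$. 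If $\eta$ is a limit, it has cofinality $\o$ (being countable); one then writes $\eta$ as the supremum of an increasing sequence $\langle\nu_k : k<\o\rangle$ with $\nu_0=0$, lets $\l_k$ be the ordinal with $\nu_k+\l_k=\nu_{k+1}$, picks $\b=\varepsilon_0<\varepsilon_1<\dots$ cofinal in $\a'$ with $rank(\varepsilon_{k+1},\varepsilon_{k+1})\geq\l_k$ (possible since $\{\varepsilon<\a':rank(\varepsilon,\varepsilon)\geq\l_k\}$ is cofinal in $\a'$ by the hypothesis on $\a'$), uses the induction hypothesis (valid as $\l_k<\eta$) to place on each $[\nu_k,\nu_{k+1}]$ a normal segment running from $\varepsilon_k$ to $\varepsilon_{k+1}$, and sets $g(\eta)=\a'$; one then checks continuity at the interior limit ordinals and that the supremum of $g$ below $\eta$ is indeed $\a'$.

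Granting the claim, the first piece is obtained by applying it with $\a'=\a_0$, $\eta=\x_0$, $\b=0$ (or simply setting $F(0)=\a_0$ if $\x_0=0$), and the piece on $[\x_j,\x_{j+1}]$ by applying it with $\a'=\a_{j+1}$, with $\eta$ the ordinal satisfying $\x_j+\eta=\x_{j+1}$, and with $\b=\a_j$ — noting that $\eta\leq\x_{j+1}$, so $rank(\a_{j+1},\a_{j+1})\geq\x_{j+1}\geq\eta$, and that $\a_j<\a_{j+1}$ — and then transporting the resulting $g$ to $[\x_j,\x_{j+1}]$ along $\z\mapsto g(\g)$, where $\x_j+\g=\z$.

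The step I expect to need the most care is the limit case of the claim: arranging the sequence $\langle\nu_k : k<\o\rangle$ so that the pasted segments have combined domain exactly $\eta$ (not an overshoot), and verifying continuity of the result at the interior limit ordinals and that its supremum below $\eta$ equals $\a'$. Everything else — monotonicity across the splice points, continuity at the $\x_j$, and that $F$ maps into $\o_1$ — is routine bookkeeping once the claim is in hand.
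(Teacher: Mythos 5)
Your proof is correct and takes essentially the same route as the paper: decompose at the finitely many points of $dom(f)$, reduce to the claim that between any pair of consecutive values $\a_j<\a_{j+1}$ with $rank(\a_{j+1},\a_{j+1})\geq\x_{j+1}$ there is a strictly increasing continuous interpolation of the right length, and prove that claim by ordinal induction using the definition of rank (successor step via a single intermediate point of high rank, limit step by splicing $\o$-many segments along a cofinal sequence). The paper states exactly this reduction and dismisses the induction as ``immediate by induction on $\x$ and using the definition of rank''; you have simply written out that induction in full, including the correct handling of the degenerate case $\x_0=0$ and the transfer of the rank hypothesis to the translated interval.
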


\begin{proof}
It suffices to prove, for all $\x<\o_1$, that if $\x_0<\x$ and $\a<\b<\o_1$ are such that $rank(\a, \a)\geq\x_0$ and $rank(\b, \b)\geq\x$, then there is a strictly increasing and continuous function $h:[\x_0,\,\x]\into [\a,\, \b]$ with $h(\x_0)= \a$ and $h(\x)= \b$. The proof of this fact is immediate by induction on $\x$ and uses the definition of rank.
\end{proof}
 
 Let us first show the following.

\begin{proposition}\label{fa-vs-wcg}
$\textsc{PFA}^{\mbox{fin}}(\o_1)$ implies $\lnot\textsc{WCG}$.
\end{proposition}

\begin{proof}
Let $\mtcl A = \la A_\d\,:\, \d\in Lim(\o_1)\ra$ be a ladder system on $\o_1$. We want to show that there is a club $C\sub\o_1$ such that $C\cap A_\d$ is finite for every limit ordinal $\d\in C$. Let $\mtbb P_{\mtcl A}$ be the following partial order: 

A condition in $\mtbb P_{\mtcl A}$ is a pair $(f, \langle b_{\delta} \,:\, \delta \in D\rangle)$ with the following properties:

\begin{itemize}
\it[(1)] $f\sub\o_1\times\o_1$ is a strictly increasing finite function such that $rank(f(\x), f(\x))\geq\x$ for every $\x\in dom(f)$.

\it[(2)] $D\sub dom(f)\cap Lim(\o_1)$ and for each $\delta \in D$, $b_\d$ is a finite subset of $A_{f(\d)}$ and $range(f) \cap A_{f(\delta)} =b_{\delta}$. 

\end{itemize}

Given $\mtbb P_{\mtcl A}$--conditions $p^\e = (f^{\e}, \langle b^{\e}_{\delta} \,:\, \delta \in D_\e)\rangle)$ for $\e\in\{0, 1\}$, $p_1$ extends $p_0$ if and only if

\begin{itemize}

\it[(i)] $f^0\sub f^1$,

\it[(ii)] $D_0\sub D_1$, and

\it[(iii)] $b^0_\delta = b^1_\delta$ for every  $\delta \in D_0$.

\end{itemize}

The forcing $\mtbb P_{\mtcl A}$ is a natural variation of Baumgartner's forcing for adding a club of $\o_1$ with finite conditions (see \cite{BAU}). It clearly has size $\al_1$, so in order to show that there is a club of $\o_1$ avoiding $\mtcl A$ it will suffice to argue that $\mtbb P_{\mtcl A}$ adds such a club and that it is finitely proper. The proof of the following lemma is completely standard and essentially like in the corresponding proof for Baumgartner's forcing, using Lemma \ref{extend} and the fact that if $(f, \la b_\d\,:\,\d\in D\ra)$ is a condition in $\mtbb P_\mtcl A$ and $\d\in D$, then $rank(f(\d)\setminus A_{f(\d)}, f(\d))\geq\d$ (which is true by Lemma \ref{small}, since $rank(A_{f(\d)}, f(\d)) = 1< \d$).   

\begin{lemma}\label{club}
Let $p=(f, \la b_\d\,:\,\d\in D\ra)\in \mtbb P_{\mtcl A}$. Then the following is true.
\begin{itemize} 
\it[(i)] For all $\beta<\o_1$ there is some $(f', \la b'_\d\,:\,\d\in D'\ra)$ in $\mtbb P_\mtcl A$ extending $p$ and such that $\beta \in dom(f')$. If $\beta$ is a limit ordinal, then we may take $D'$ such that $\beta \in D'$. Furthermore, if $\beta\notin dom(f)$ is such that $f``\b\sub\b$ and $rank(\b, \b)=\b$, then we may take $f'$ such that $f'(\b)=\b$.
\it[(ii)] For every limit ordinal $\a\in dom(f)$ and every $\x< f(\a)$ there is some  $(f', \la b'_\d\,:\,\d\in D'\ra)$ in $\mtbb P_\mtcl A$ extending $p$ and there is some $\b\in dom(f')\cap\a$ such that $f'(\b)>\x$.

\end{itemize} 
\end{lemma}

It follows from Lemma \ref{club} that $\mtbb P_{\mtcl A}$ forces that the union of the ranges of all first components of conditions in the generic filter is a club of $\o_1^V$ and that this club has finite intersection with each $A_\d$.

It remains to show that $\mtbb P_\mtcl A$ is finitely proper. The proof of this is basically the same as the proof that $\mtbb P_\mtcl A$ is proper (which is quite well--known, see \cite{SHELAHPIF}). For completeness we give the proof of finite properness. 

\begin{lemma}\label{wcg-fin-proper} $\mtbb P_{\mtcl A}$ is finitely proper.\end{lemma}

\begin{proof} Let $\{N_i\,:\, i \in m\}$ be a finite set of countable elementary substructures of $H(\o_2)$ such that $\mtbb P_\mtcl A \in N_i$ for all $i$. Since $rank(\d_{N_i}, \d_{N_i}) = \d_{N_i}$ for all $i$, by Fact \ref{club} (i) we know that every condition in $\bigcap_{i<m}N_i$ can be extended to a condition $(f, \la b_\d\,:\,\d\in D\ra)$ such that $\d_{N_i}\in dom(f)$ and $f(\d_{N_i})=\d_{N_i}$ for all $i$. Hence, it will suffice to show that if $p =(f, \langle b_{\delta} \,: \,\delta \in D\rangle)$ is a condition in $\mtbb P_\mtcl A$ and each $\d_{N_i}$ is a fixed point of $f$, then $p$ is $(N_i,\, \mtbb P_\mtcl A)$--generic for all $i$. For this, fix $i< m$, $E$ a dense subset of $\mtbb P_\mtcl A$ in $N_i$, and suppose without loss of generality that $p$ is in $E$. We may also assume that $f\restr\delta_{N_i}$ is nonempty. It suffices to show that $p$ is compatible with a condition in $E\cap N_i$.

For this, let $\m= max(range(f\restr \d_{N_i}))$ and let $g:\o_1\setminus (\m + 1) \into \o_1$ be the function sending each $\n$ in $\o_1\setminus(\m+1)$ to the least $\x$ with the property that there is a condition $p'$ in $E$, $p' =  (f', \langle b'_{\delta} \,: \,\delta \in D' \rangle)$, such that 

\begin{itemize} 

\it[(a)] $p'$ extends $(f\restr\d_{N_i}, \la b_\d\,:\, \d\in D\cap \d_{N_i}\ra)$, 

\it[(b)] $f'\restr\d_{N_i} = f\restr\d_{N_i}$, 

\it[(c)] $\x>\n$, and 

\it[(d)] $\x$ is the least ordinal in the range of $f'$ strictly above $\m$.  

\end{itemize}

Note that for every $\n\in \d_{N_i}\setminus (\m + 1)$, $\d_{N_i}$ and $p$ witness together that the set of pairs $(\x, p')$ satisfying (a)--(d) is nonempty. Hence $g$ is a well--defined function. Note also that $g$, being definable from the condition $(f\restr\d_{N_i}, \la b_\d\,:\, \d\in D\cap \d_{N_i}\ra)$, is in $N_i$ since $f(\d_{N_i}) = \d_{N_i}$ and therefore  $(f\restr\d_{N_i}, \la b_\d\,:\, \d\in D \cap \d_{N_i}\ra)$ is in $N_i$. It follows that the club $C$ of $\eta<\o_1$ such that $g``\eta\sub\eta$ is also in  $N_i$. Now, $C$ has order type $\o_1$, and therefore $C\cap\d_{N_i}$ has order type $\d_{N_i}$ by correctness of $N_i$. Hence, we may find some $\eta\in \d_{N_i}\cap C$ and some $\n<\eta$ such that $[\n,\,\eta]$ has empty intersection with $A_{f(\d)}$ for every $\d\in D$ such that $\d\geq\d_{N_i}$. But then, by definition of $g$ together with the correctness of $N_i$ there is some $p' = (f', \langle b'_{\delta} \,: \,\delta \in D' \rangle)$ in $N_i\cap E$ extending $(f\restr\d_{N_i}, \la b_\d\,:\, \d\in  D \cap \d_{N_i}\ra)$, such that $f'  \restr\d_{N_i} = f\restr\d_{N_i}$, and such that the least ordinal in the range of $f'$ strictly above $\m$ is also above $\n$. But then $f\cup f'$ is a function satisfying condition (1) in the definition of $\mtbb P_\mtcl A$ and, in addition, $range(f\cup f')\cap A_{f(\d)} = range(f)\cap A_{f(\d)} = b_{\d}$ for every $\d\in D$. It then follows that $(f\cup f', \vec b)$, where $\vec b$ is the union of $\la b_\d\,:\,\d\in D\ra$ and $\la b'_\d\,:\,\d\in D'\ra$, is a condition in $\mtbb P_\mtcl A$ extending both $p$ and $p'$.
\end{proof}

Lemma \ref{wcg-fin-proper} completes the proof of the proposition.

\end{proof}

Doing minor modifications to the forcing in the proof of Proposition \ref{fa-vs-wcg} it is easy to derive other similar statements from $\textsc{PFA}^{\mbox{fin}}(\o_1)$. For example one can check that the negation of \emph{Very Weak Club Guessing} ($\textsc{VWCG}$) follows from $\textsc{PFA}^{\mbox{fin}}(\o_1)$, where $\textsc{VWCG}$ is the assertion that there is a collection $\mtcl A$ of size $\al_1$ consisting of subsets of $\o_1$ of order type $\o$ such that every club of $\o_1$ has infinite intersection with
some $A\in\mtcl A$. In other words, $\textsc{VWCG}$ says the same thing as $\textsc{WCG}$ but allowing $\al_1$--many cofinal subsets of $\delta$ for each $\delta \in Lim(\omega_{1})$. One can actually show that $\textsc{PFA}^{\mbox{fin}}(\o_1)$ implies the negation of the even weaker versions of $\textsc{VWCG}$ where one fixes a countable ordinal $\tau$ and considers sets of ordinals of order type at most $\tau$. Specifically, one has the following.

\begin{proposition}\label{strong-wcg-negation} $\textsc{PFA}^{\mbox{fin}}(\o_1)$ implies that for every $\tau<\o_1$ and every set $\mtcl A$, if $\mtcl A$ is a collection of $\aleph_1$--many sets of ordinals of order type at most $\tau$, then there is a club $C\sub\o_1$ having finite intersection with all members of $\mtcl A$.\end{proposition}

The proof of Proposition \ref{strong-wcg-negation}, which we will omit here, is essentially the same as the proof of Proposition \ref{fa-vs-wcg}.

Finally we derive the failure of $\mho$ from $\textsc{PFA}^{\mbox{fin}}(\o_1)$.

\begin{proposition}\label{fa-vs-mho}
$\textsc{PFA}^{\mbox{fin}}(\o_1)$ implies $\lnot\mho$.
\end{proposition}

\begin{proof} 

We will prove that every instance of $\lnot \mho$ follows from $\textsc{PFA}^{\mbox{fin}}(\o_1)$.
Let $\mtcl G=\langle g_\d\,:\,\d\in\o_1 \rangle$ be such that each $g_\d$ is a continuous function
from $\d$ into $\o$ with respect to the order topology. Let $\mtbb P_\mtcl G$ be the forcing notion consisting of all pairs $(f, \langle d_\x\,:\,\x\in D\ra)$ satisfying the following conditions.

\begin{itemize}

\it[(1)] $f\sub\o_1\times\o_1$ is a finite strictly increasing function.

\it[(2)] For every $\x\in dom(f)$, $rank(f(\x), f(\x))\geq\x$.

\it[(3)] $D\sub dom(f)$ and for every $\x\in D$,

\begin{itemize}

\it[(3.1)] $d_\x<\o$,

\it[(3.2)] $g_{f(\x)}``\,range(f)\sub\o\bs\{d_\x\}$, and

\it[(3.3)] $rank(\{\g< f(\x)\,:\, g_{f(\x)}(\g)\neq d_\x\},
f(\x))=rank(f(\x), f(\x))$.

\end{itemize}

\end{itemize}

Given conditions $p_\e=(f_\e, \langle d^\e_\x\,:\,\x\in D_\e \rangle)\in \mtbb P_\mtcl G$ for $\e\in\{0, 1\}$, we say that $p_1$ extends
$p_0$ if and only if $f_0\sub f_1$, $D_0\sub D_1$, and $d^1_\x = d^0_\x$ for all $\x\in D_0$.

Lemma \ref{001} is easy to prove by appealing to condition (2) in the definition of $\mtcl P_\mtcl G$, together with the openness of all $g_{\d}^{-1}(n)$.


\begin{lemma}\label{001}
For every $p=(f, \langle d_\x\,:\,\x\in D \rangle )\in \mtbb P_\mtcl G$ and every $\x_0<\o_1$ there is a condition $p'\in\mtbb P_\mtcl G$
extending $p$ and such that $\x_0\in dom(f^{p'})$. Also, if $\x\in
dom(f)$ is a limit ordinal and $\e<f(\x)$, then there is a condition
$p'\in \mtbb P_\mtcl G$ and some $\x'<\x$ in $dom(f^{p'})$ such that
$f^{p'}(\x')>\e$.
\end{lemma}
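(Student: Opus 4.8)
I would handle the two assertions separately, both by direct finite extensions of the condition $p=(f,\la k_\x\,:\,\x\in D\ra)$, using the definition of Cantor--Bendixson rank and the continuity (hence the openness of level sets) of the functions $g_\d$.

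\textbf{First assertion.} Given $\x_0<\o_1$, I want a condition $p'\le p$ with $\x_0\in dom(f^{p'})$. If $\x_0\in dom(f)$ there is nothing to do, so assume $\x_0\notin dom(f)$. Let $\x_-=\max(dom(f)\cap\x_0)$ (if it exists) and $\x_+=\min(dom(f)\setminus\x_0)$ (if it exists). The point is to choose an ordinal $\gamma$ with $f(\x_-)<\gamma<f(\x_+)$ (or simply $\gamma>f(\x_-)$, resp. $\gamma<f(\x_+)$, if only one side exists, or any $\gamma$ if $dom(f)=\emptyset$) such that $rank(\gamma,\gamma)\ge\x_0$ and such that setting $f'=f\cup\{(\x_0,\gamma)\}$ still satisfies clause (3) for every $\x\in D$ with $\x>\x_0$; that is, $\gamma\notin g_{f(\x)}^{-1}(k_\x)$ for all such $\x$. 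Since there are only finitely many $\x\in D$ with $\x>\x_0$, and for each of them the set $\{\epsilon<f(\x)\,:\,g_{f(\x)}(\epsilon)\neq k_\x\}$ has full rank (by clause (3.3), this rank is $rank(f(\x),f(\x))\ge\x>\x_0$) and is open below $f(\x)$, I can pick $\gamma$ avoiding the finitely many closed ``bad'' sets $g_{f(\x)}^{-1}(k_\x)$, lying in the required open interval, and having $rank(\gamma,\gamma)\ge\x_0$ (the existence of such $\gamma$ inside a prescribed open interval of ordinals below $f(\x_+)$ follows from the fact that a set of full rank $\ge\x_0$ has elements of rank $\ge\x_0$ in every subinterval where it accumulates — and one must arrange $\x_+$, or the relevant $f(\x)$'s, to have $rank\ge\x_+>\x_0$, which is exactly clause (2)). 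Clause (3.3) is unaffected by adding a single point to $range(f)$ provided $\gamma$ is chosen as above, because removing or adjoining one point does not change the Cantor--Bendixson rank of the relevant level-complement set. So $p'=(f',\la k_\x\,:\,\x\in D\ra)$ works.

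\textbf{Second assertion.} Now suppose $\x\in dom(f)$ is a limit ordinal and $\epsilon<f(\x)$; I want $p'\le p$ and $\x'<\x$ in $dom(f^{p'})$ with $f^{p'}(\x')>\epsilon$. Since $\x$ is a limit ordinal and $rank(f(\x),f(\x))\ge\x$, the ordinal $f(\x)$ is in particular a limit of ordinals $\beta$ with $rank(\beta,\beta)\ge\x'$ for every $\x'<\x$; so I may pick some $\x'<\x$ larger than $\max(dom(f)\cap\x)$ and then pick $\gamma\in(\epsilon,f(\x))$, above $f(\max(dom(f)\cap\x))$, with $rank(\gamma,\gamma)\ge\x'$, again avoiding the finitely many bad sets $g_{f(\eta)}^{-1}(k_\eta)$ for $\eta\in D$, $\eta>\x'$ (here one also uses that $\gamma<f(\x)$, so adjoining $(\x',\gamma)$ keeps $f$ strictly increasing and, by the same rank-stability remark, preserves clause (3.3)). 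Setting $f'=f\cup\{(\x',\gamma)\}$ and keeping the $\vec k$-part gives the desired $p'$.

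\textbf{Main obstacle.} The routine bookkeeping (keeping $f$ strictly increasing, verifying clauses (1)--(2)) is immediate; the one genuine point is showing that the chosen $\gamma$ can \emph{simultaneously} be placed in a prescribed open interval of ordinals, have Cantor--Bendixson self-rank at least the required countable ordinal, and avoid finitely many prescribed closed (in the order topology) ``forbidden'' level sets $g_{f(\eta)}^{-1}(k_\eta)$ — and that after inserting $\gamma$ into $range(f)$, clause (3.3) still holds for every $\eta\in D$. This last stability is the crux: it rests on the elementary observation that the Cantor--Bendixson rank of a set of ordinals with respect to an ordinal is unchanged by adding or deleting a single point (more generally finitely many points), so $rank(\{\g<f(\eta):g_{f(\eta)}(\g)\neq k_\eta\},f(\eta))$ is insensitive to the finite modification of $range(f)$, while the requirement ``$\gamma\notin g_{f(\eta)}^{-1}(k_\eta)$'' is exactly what guarantees that the left-hand set does not \emph{lose} the point $\gamma$ if $\gamma$ happened to be relevant. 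I expect this to be the place where a careful write-up spends most of its words, but it is a finite, purely combinatorial argument with no forcing-theoretic subtlety.
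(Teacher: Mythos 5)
Your overall plan --- extend $f$ by a single new pair $(\x_0,\gamma)$, exploiting the openness of the level sets of the continuous maps $g_\d$ together with the rank requirement in the definition of the forcing --- matches the paper's, and the lemma is indeed true. But the part of your write-up that you flag as ``the crux'' is aimed at a non-issue. You repeatedly say that inserting $\gamma$ into $range(f)$ might disturb clause $(3.3)$ and you develop a finite-modification stability argument for it. In fact $(3.3)$ does not mention $range(f)$ at all: the set $\{\g<f(\x):g_{f(\x)}(\g)\neq k_\x\}$ is determined solely by $g_{f(\x)}$ and $k_\x$, and $rank(f(\x),f(\x))$ solely by the ordinal $f(\x)$; since $f(\x)$ and $k_\x$ are unchanged in $p'$, clause $(3.3)$ is preserved trivially. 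The clause that genuinely constrains $\gamma$ is $(3.2)$, namely $g_{f(\x)}``\,range(f)\subseteq\o\setminus\{k_\x\}$, and it is preserved precisely because $\gamma$ is chosen outside each $g_{f(\x)}^{-1}(k_\x)$. So the stability remark you elaborate on does no work, while the genuinely delicate step --- exhibiting such a $\gamma$ at all --- is compressed into a parenthesis.

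For that step, the paper's argument is worth spelling out. Put $\x_1=\min(D\setminus\x_0)$. For each of the finitely many $\x'>\x_1$ in $D$, clause $(3.2)$ for $p$ gives $g_{f(\x')}(f(\x_1))=l_{\x'}\neq k_{\x'}$, and by continuity one fixes a \emph{single} $\eta<f(\x_1)$ such that $g_{f(\x')}``\,[\eta,f(\x_1))=\{l_{\x'}\}$ for all of them simultaneously. This disposes at one stroke of every bad set except the one belonging to $\x_1$ itself. For that one, clause $(3.3)$ finally does its work: with $X=\{\g<f(\x_1):g_{f(\x_1)}(\g)\neq k_{\x_1}\}$ one has $rank(X,f(\x_1))=rank(X\setminus\eta,f(\x_1))=rank(f(\x_1),f(\x_1))\geq\x_1>\x_0$, so one finds $\gamma\in X\cap[\eta,f(\x_1))$ with $rank(\gamma,\gamma)\geq\x_0$. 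The essential move --- anchoring on the existing point $f(\x_1)\in range(f)$ so that continuity handles all but one bad set, and reserving the rank hypothesis $(3.3)$ for the single remaining constraint --- is what your ``pick $\gamma$ avoiding finitely many closed sets with high rank in a prescribed interval'' needs but does not actually supply. (The second assertion of the lemma is proved the same way, with the anchor point taken cofinally below $f(\x)$ rather than just below $f(\x_1)$.)
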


\begin{proof}
Let us prove the first claim (the second claim is proved similarly).
We may assume that $\x_0\notin dom(f)$ and that $\x_1=min(D\bs
\x_0)$ exists (otherwise the proof is easier).

Note that for every $\x'>\x_1$ in $D$ there is some $l_{\x'}<\o$,
$l_{\x'}\neq d_{\x'}$, such that $g_{f(\x')}(f(\x_1))=l_{\x'}$.
Since all $g_{f(\x')}^{-1}(\{l_{\x'}\})$ are open in the order
topology, we may fix $\eta<f(\x_1)$ such that
$g_{f(\x')}``[\eta,\,f(\x_1))=\{l_{\x'}\}$ for every
$\x'>\x_1$ in $D$. Let $X=\{\g< f(\x_1)\,:\, g_{f(\x_1)}(\g)\neq
d_{\x_1}\}$.

Since $rank(X, f(\x_1))= rank(X\bs\eta, f(\x_1))=rank(f(\x_1),
f(\x_1))\geq\x_1$, we may find $\g\in[\eta,\,f(\x_1))$ such that
$g_{f(\x_1)}(\g)\neq d_{\x_1}$ and such that $rank(\g, \g)\geq\x_0$.

Now it is easy to check that $p'=(f\cup\{\langle\x_0, \g\rangle\}, \langle d_\x\,:\,\x\in D
\rangle)$ is a condition extending $p$.
\end{proof}

\begin{lemma}\label{promises}
For every $p=(f, (d_\x\,:\,\x\in D))\in \mtbb P_\mtcl G$ and every
$\x\in dom(f)$ there is a condition $p'\in\mtbb P_\mtcl G$ extending
$p$ and such that $\x\in D^{p'}$.
\end{lemma}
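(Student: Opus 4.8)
The plan is to extend $p = (f, \langle k_\x : \x \in D \rangle)$ by first forcing $\x$ into the domain of the first coordinate (via Lemma \ref{001}) and then, separately, adding $\x$ to the set $D$ with an appropriate value $k_\x$. So I may assume $\x \in dom(f)$ and $\x \notin D$, and I want to choose $k_\x < \o$ and add the pair $\langle \x, k_\x \rangle$ to the $\vec k$--part, producing a condition $p' = (f, \langle k_\xi : \xi \in D \cup \{\x\}\rangle)$ extending $p$. The only nontrivial obligations are clauses (3.2) and (3.3) in the definition of $\mtbb P_\mtcl G$: I need $g_{f(\x)}$ to miss the value $k_\x$ on $range(f)$, and I need the set $\{\g < f(\x) : g_{f(\x)}(\g) \neq k_\x\}$ to have Cantor--Bendixson rank (with respect to $f(\x)$) equal to $rank(f(\x), f(\x))$, which by clause (2) is $\geq \x$.

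The key observation is that $g_{f(\x)} : f(\x) \to \o$ is continuous, hence (as noted in the introduction) $f(\x)$ can be partitioned into open intervals on which $g_{f(\x)}$ is constant; equivalently there is a cofinal $C_0 \subseteq f(\x)$ of order type at most $\o$ such that $g_{f(\x)}(\e)$ depends only on $|\e \cap C_0|$. Since $range(f)$ is finite, $g_{f(\x)}$ takes only finitely many values on $range(f)$; call this finite set $S_0 \subseteq \o$. First I would pick any $k_\x \in \o \setminus S_0$ that also appears as a value of $g_{f(\x)}$ on an unbounded piece of a suitable final segment of $f(\x)$ — this requires a small argument. If $g_{f(\x)}$ is eventually constant with some value $n$, take $k_\x = n$: then clause (3.3) is automatic because $\{\g : g_{f(\x)}(\g) = n\}$ contains a final segment of $f(\x)$ and so has full rank, and clause (3.2) holds as long as $n \notin S_0$, which I can arrange by instead noting that if $n \in S_0$ I can still work around it since $range(f)$ is bounded below $f(\x)$ and I only need $g_{f(\x)}$ to omit $k_\x$ on $range(f)$ — so actually the relevant constraint is only on the finitely many points of $range(f)$ below $f(\x)$, all lying below some $\beta_0 < f(\x)$.

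Here is the cleaner way to organize it. Let $\beta_0 = \max(range(f) \cap f(\x))$ (if this is empty the problem is trivial). Since $g_{f(\x)}$ is continuous and $\{g_{f(\x)}(\g) : \g \in range(f)\}$ is finite, and since $rank(f(\x), f(\x)) \geq \x \geq 1$ forces $f(\x)$ to be a limit, there must be some value $m < \o$ such that $\{\g < f(\x) : g_{f(\x)}(\g) = m\}$ is cofinal in $f(\x)$ (because $g_{f(\x)}$ takes only finitely many values but $f(\x)$ is infinite); moreover, among the finitely many values realized cofinally often, at least one, call it $k_\x$, yields a set $\{\g < f(\x) : g_{f(\x)}(\g) \neq k_\x\}$ of rank equal to $rank(f(\x), f(\x))$ — indeed the union over all other values of these "level sets" would otherwise be bounded, contradicting that they together cover a tail. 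This gives (3.3). For (3.2), I may need to adjust: if $g_{f(\x)}$ takes value $k_\x$ on some point of $range(f)$, I note all such points lie below $\beta_0 < f(\x)$, so by openness of $g_{f(\x)}^{-1}(j)$ for the finitely many $j \in S_0$, the values of $g_{f(\x)}$ on $(\beta_0, f(\x))$ are unconstrained, and I can re-run the cofinality/rank argument restricted to the tail $(\beta_0, f(\x))$ — whose rank with respect to $f(\x)$ still equals $rank(f(\x), f(\x))$ — to pick $k_\x$ avoiding $S_0$ entirely. I expect the main obstacle to be exactly this interaction: simultaneously guaranteeing $k_\x \notin S_0$ (clause 3.2) and keeping the full Cantor--Bendixson rank (clause 3.3); the resolution is that clause (3.2) only concerns the bounded set $range(f)$, so one works above $\beta_0$ where finitely many forbidden values cost nothing in rank. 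Then $p' = (f, \langle k_\xi : \xi \in D \cup \{\x\}\rangle)$ is easily checked to satisfy (1), (2), (3.1)--(3.3) and to extend $p$.
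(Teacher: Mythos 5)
Your overall strategy is right -- reduce to the case $\x\in dom(f)$, then pick a value $k_\x$ that avoids the finitely many ``forbidden'' colours so as to secure clauses (3.2) and (3.3) -- and the paper itself treats this lemma as routine. But the execution has several substantive errors, the most serious of which would make the proof fail as written.

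First, your initial paragraph gets the direction of clause (3.3) backwards. If $g_{f(\x)}$ is eventually constant with value $n$ and you take $k_\x=n$, then $\{\g<f(\x):g_{f(\x)}(\g)\neq k_\x\}$ is a \emph{bounded} subset of $f(\x)$, so its Cantor--Bendixson rank at $f(\x)$ is $0$, not full. This is precisely the unique choice you must \emph{avoid}; clause (3.3) needs the complement of the level set $g_{f(\x)}^{-1}(k_\x)$ to have full rank, so you want $k_\x$ to be a ``small'' colour, not one realized on a tail.

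Second, your ``cleaner'' argument leans on the claim that ``$g_{f(\x)}$ takes only finitely many values.'' That is false: a continuous map from a countable limit ordinal into $\o$ can have infinite range, and when $cf(f(\x))=\o$ one can even have infinitely many cofinal level sets (put new colours in each $\o$-block of $\o^2$, say). So the set of cofinally realized values need not be finite, and the subsequent case analysis (``among the finitely many values realized cofinally often, at least one\ldots'') does not get off the ground. The supporting remark that ``the union over all other values would otherwise be bounded'' also only makes sense when $rank(f(\x),f(\x))=1$; for higher ranks ``bounded'' is not the relevant dichotomy.

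The clean way to close the argument is this. Since disjoint pieces of a partition of $f(\x)$ realize the rank at $f(\x)$ ``disjointly'' -- more precisely, the Cantor--Bendixson derivatives satisfy $(X\cup Y)^{(\alpha)}=X^{(\alpha)}\cup Y^{(\alpha)}$, which for limit $\alpha$ uses that the $X^{(\eta)}$'s are decreasing -- one sees that $rank(X\cup Y,\d)=\max(rank(X,\d),rank(Y,\d))$. Consequently, there is \emph{at most one} colour $m^\ast$ for which $rank(f(\x)\setminus g_{f(\x)}^{-1}(m^\ast),f(\x))<rank(f(\x),f(\x))$: if $m_1\neq m_2$ were both such, then $f(\x)=(f(\x)\setminus g_{f(\x)}^{-1}(m_1))\cup(f(\x)\setminus g_{f(\x)}^{-1}(m_2))$ would have strictly smaller rank at $f(\x)$ than itself. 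Now choose any $k_\x\in\o\setminus(S_0\cup\{m^\ast\})$ where $S_0=g_{f(\x)}``\,range(f)$ is finite; such a $k_\x$ exists, and it satisfies (3.2) by choice and (3.3) because it is not the bad colour $m^\ast$. (When $\x=0$ and $f(0)$ is a successor, (3.3) is vacuous and any $k_\x\notin S_0$ works.)
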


\begin{proof}
Fix two distinct colours $d$, $d'$ in $\o\setminus range(g_{f(\x)}\restr range(f))$. If suffices to prove that at least one of 

\begin{itemize}

 \it[(i)] $rank(\{\g < f(\x)\,:\, g_{f(\x)}(\g)\neq d\}, f(\x))=rank(f(\x), f(\x))$ and
 
  \it[(ii)] $rank(\{\g < f(\x)\,:\, g_{f(\x)}(\g)\neq d'\}, f(\x))=rank(f(\x), f(\x))$ \end{itemize}
  
  \noindent holds. But if $rank(\{\g < f(\x)\,:\, g_{f(\x)}(\g)\neq d\}, f(\x)) < rank(f(\x), f(\x))$, then $rank(\{\g < f(\x)\,:\, g_{f(\x)}(\g) = d\}, f(\x)) = rank(f(\x), f(\x))$ by Lemma \ref{small}, and therefore $rank(\{\g < f(\x)\,:\, g_{f(\x)}(\g)\neq d'\}, f(\x)) = rank(f(\x), f(\x))$ since $\{\g<f(\x)\,:\,g_{f(\x)}(\g)=\d\}$ is contained in $\{\g < f(\x)\,:\, g_{f(\x)}(\g)\neq d'\}$.
\end{proof}

It follows from the above lemmas that 
if $G$ is $\mtbb P_\mtcl G$--generic, then the union of the ranges of all first components of conditions in $G$ is a club $C$  of $\o_1^V$ and for every $\d\in C$ there is some $d_\d\in\o$ such that $g_\d``\,C\sub\o\bs\{d_\d\}$.

Obviously, $\mtbb P_\mtcl G$ has cardinality $\al_1$. It remains to show the following. 

\begin{lemma}\label{fin-prop-mho} $\mtbb P_\mtcl G$  is finitely proper.\end{lemma}

\begin{proof} Let $\{N_i\,:\, i \in m\}$ be a finite set of countable elementary substructures of $H(\o_2)$ containing $\mtbb P_\mtcl G$ and let $p =(f, (d_\x\,:\,\x\in D))$ be a condition of $\mtbb P_\mtcl G$ such that for each $i$:

\begin{itemize}

\it[(a)] $\d_{N_i}$ is a fixed point of $f$,

\it[(b)] $\d_{N_i} \in D$, and

\it[(c)] $\{ \beta< \d_{N_i} \,:\, g_{\d_{N_i} }(\beta) \neq d_{\d_{N_i}} \}$ is $N_j$--stationary\footnote{The concept of $M$--stationarity appears in \cite{MOORE}. In our context, saying that $Y\sub\o_1$ is $N$--stationary means that $Y$ intersects all clubs of $\o_1$belonging to $N$.} for every $j \in m$ such that $\d_{N_i}= \d_{N_j}$.
\end{itemize}

\noindent By arguing as in the proof of Lemma \ref{wcg-fin-proper} it is easy to see that such a $p$ is $(N_i, \mtbb P_\mtcl G)$--generic for all $i$. The main point is that if $C\sub\o_1$ is a club in $N_i$ as in the proof of Lemma \ref{wcg-fin-proper}, then $C \cap \{ \beta< \d_{N_i} \,:\, g_{\d_{N_i} }(\beta) \neq d_{\d_{N_i}} \}  \neq \emptyset$. But this is of course ensured by the $N_i$--stationarity of $\{ \beta< \d_{N_i} \,:\, g_{\d_{N_i} }(\beta) \neq d_{\d_{N_i}} \}$.

Since every condition in $\bigcap_{i\leq m}N_i$ can be extended to a condition $p=(f, (d_\x\,:\,\x\in D))$ satisfying (a), the proof of the lemma will be finished once we show that every $p=(f, (d_\x\,:\,\x\in D))$ satisfying (a) can be extended to a $\mtbb P_\mtcl G$--condition $p'$ satisfying also (b) and (c). For this, let $(\d_j)_{j<n}$ be the increasing enumeration of $\{\d_{N_i}\,:\,i<m\}$ and let $(i^j_k)_{j< n,\, k<n_j}$ be such that $\{N_i\,:\,\d_{N_i}=\d_j\}=\{N_{i^j_0},\ldots N_{i^j_{n_j-1}}\}$ for all $j$. For each $j$ let $\{d^j_0,\ldots d^j_{n_j}\}$ be such that $\{d^j_0,\ldots d^j_{n_j}\}\cap g_{\d_j}``\,range(f)=\emptyset$.

\begin{claim} For every $j$ there is some
$d(j) \in \{d^j_0,\ldots d^j_{n_j}\}$ such that $\{\beta< \d_{N_j} \,:\, g_{\d_{N_j} }(\beta) \neq d(j) \}$ is $N_{i^j_{k}}$--stationary for every $k < n_j$.\end{claim}

 \begin{proof} By arguing as in the proof of  Lemma \ref{promises} one can see that for every $k<n_j$ there is some $e_k\in\{d^j_0,\ldots d^j_{n_j}\}$ such that $\{\b < \d_{N_j}\,:\, g_{d_{N_j}}(\b)\neq d\}$ is $N_{i^j_k}$--stationary for every $d\in\{d^j_0,\ldots d^j_{n_j}\} \bs \{e_k\}$. But then, if $d\in\{ d^j_0,\ldots d^j_{n_j}\}\bs\{e_0,\ldots e_{n_j-1}\}$, then $\{\b < \d_{N_j}\,:\, g_{d_{N_j}}(\b)\neq d\}$ is $N_{i^j_k}$--stationary for every $k$.\end{proof}
 
Now we may extend $p$ to a condition $p'$ of the form $(f, (d'_\x\,:\,\x\in D\cup\{\d_0,\ldots \d_{n-1}\}))$ where $d'_\x =  d_\x$ if $\x\in D$ and $d'_{\d_j} = d(j)$ if $j<n$ and $\d_j\notin D$, and $p'$ will satisfy (a)--(c).  The point is that condition (3.3) in the definition of $\mtbb P_\mtcl G$ holds for $p'$ thanks to (c). For this, given any $i$ and any $\n<\d_{N_i}$, let $C\in N_i$ be a club of $\x<\o_1$ such that $rank(\x,\,\x)\geq \n$ and note that $C\cap g_{\d_{N_i}}^{-1}(d_{N_i})\neq\emptyset$. \end{proof}

Lemma \ref{fin-prop-mho} concludes the proof of the proposition.\end{proof}

We do not know whether $\textsc{PFA}^{\mbox{fin}}(\o_1)$ implies $\lnot\mho_n$ for any $n$, $2\leq n<\o$. As a matter of fact, the methods in the present paper do not seem to produce models of $\lnot\mho_n$ for any $n$. The reason is basically that if $N_0,\ldots, N_m$ are countable substructures such that $\d=N_0\cap\o_1=\ldots = N_m\cap\o_1$, $f:\d\into n$, and $n\leq m$, then it need not be true that there is any $i\in m$ such that $f^{-1}(i)$ is $N_j$--stationary for all $j\leq m$. On the other hand, a straightworfard variation of the proof of Proposition \ref{fa-vs-mho} shows that $\lnot\mho_2$ follows from $\textsc{PFA}(\o_1)$.

\end{document}